\newtheorem{theorem}{Theorem}[section]
\newtheorem{corollary}[theorem]{Corollary}
\newtheorem{proposition}[theorem]{Proposition}
\newtheorem{lemma}[theorem]{Lemma}
\theoremstyle{definition}
\newtheorem{definition}[theorem]{Definition}
\newtheorem{example}[theorem]{Example}
\newtheorem{remark}[theorem]{Remark}
\newcommand{\R}{{\mathbb R}}
\newcommand{\W}{\wedge}
\newcommand{\f}{\varphi}
\newcommand{\psip}{\psi_{\scriptscriptstyle +}}
\newcommand{\psim}{\psi_{\scriptscriptstyle -}}
\newcommand{\Wp}{w_2^{\scriptscriptstyle +}}
\newcommand{\Wm}{w_2^{\scriptscriptstyle -}}
\newcommand{\Wu}{w_1}
\newcommand{\vol}{\mathrm{vol}}
\newcommand{\tomega}{\widetilde{\omega}}
\newcommand{\omegaz}{\omega_{\sst0}}
\newcommand{\ad}{{\mathrm{ad}}}
\newcommand{\SU}{\mathrm{SU}}
\newcommand{\G}{\mathrm{G}}
\newcommand{\fra}{\mathfrak{a}}
\newcommand{\frb}{\mathfrak{b}}
\newcommand{\frg}{\mathfrak{g}}
\newcommand{\frh}{\mathfrak{h}}
\newcommand{\frk}{\mathfrak{k}}
\newcommand{\frn}{\mathfrak{n}}
\newcommand{\fre}{\mathfrak{e}}
\newcommand{\frs}{\mathfrak{s}}
\newcommand{\frz}{\mathfrak{z}}
\newcommand{\fru}{\mathfrak{u}}
\newcommand{\st}{\ |\ }
\newcommand{\diag}{\mathrm{diag}}
\newcommand{\Der}{\mathrm{Der}}
\newcommand{\End}{\mathrm{End}}
\newcommand{\sst}{\scriptscriptstyle}
\renewcommand{\span}{\mathrm{span}}
\newcommand{\tr}{\mathrm{tr}}
\numberwithin{equation}{section}
\title[Closed G$_2$-structures on  unimodular Lie algebras with non-trivial center]{Closed G$_{\mathbf2}$-structures on unimodular  Lie algebras with non-trivial center}
\address{Dipartimento di Matematica ``G. Peano'' \\ Universit\`a degli Studi di Torino\\ Via Carlo Alberto 10\\10123 Torino\\ Italy}
\author{Anna Fino}\email{annamaria.fino@unito.it}
\author{Alberto Raffero}\email{alberto.raffero@unito.it}
\author{Francesca Salvatore}\email{francesca.salvatore@unito.it}
\subjclass[2020]{53C10, 22E25, 53D10, 53D05}
\keywords{closed $\G_2$-structure, unimodular Lie algebra, center, contactization, soliton}
\begin{document}
\maketitle

\begin{abstract}
We characterize the structure of a seven-dimensional Lie algebra with non-trivial center endowed with a closed G$_2$-structure. 
Using this result, we classify all unimodular Lie algebras with non-trivial center admitting closed G$_2$-structures, up to isomorphism, 
and we show that six of them arise as the contactization of a symplectic Lie algebra.  
Finally, we prove that every semi-algebraic soliton on the contactization of a symplectic Lie algebra must be expanding, 
and we determine all unimodular Lie algebras with center of dimension at least two that admit semi-algebraic solitons, up to isomorphism. 
\end{abstract}


\section{Introduction}

A  $\G_2$-structure on a  seven-dimensional manifold $M$ is a reduction of the structure group of its frame bundle to the exceptional Lie group $\G_2$. 
This reduction exists if and only if $M$ is orientable and spin \cite{Gray}, and it is characterized by the existence of a $3$-form $\f\in\Omega^3(M)$ whose stabilizer at each point of $M$ is isomorphic to $\G_2$.  
Every such 3-form $\varphi$ induces a Riemannian metric $g_{\varphi}$ and an orientation on $M,$ and thus a Hodge star operator $*_\f$. 

\smallskip

Since every $7$-manifold admitting $\G_2$-structures is spin, it also admits almost contact structures. 
The interplay between these structures and the existence of contact structures on $7$-manifolds endowed with special types of $\G_2$-structures have been recently investigated in \cite{ACS, DLM}.

It is possible to construct examples of compact 7-manifolds with both a G$_2$-structure and a contact structure as follows. 
In \cite{BW}, Boothby and Wang showed that an even-dimensional compact manifold $N$ endowed with a symplectic form $\omegaz$ with integral periods    
is the base of a principal $\mathbb{S}^1$-bundle $\pi:M \rightarrow N$ having a connection 1-form $\theta$ that defines a contact structure on $M$ and satisfies the structure equation $d\theta = \pi^*\omegaz$. 
If $N$ is six-dimensional and it also admits a definite 3-form $\rho$ and a non-degenerate 2-form $\widetilde{\omega}$ which is taming for the almost complex structure $J$ induced by $\rho$ 
and one of the two orientations of $N,$ then the total space $M$ has a natural $\G_2$-structure defined by the 3-form 
$\f = \pi^*\widetilde{\omega} \W \theta +\pi^*\rho$ (see Sect.~\ref{IntroDef} for the relevant definitions). 
A special case of this construction occurs when the pair $(\tomega,\rho)$ defines an SU(3)-structure on $N,$ namely when the additional conditions $\tomega\W\rho=0$ and $3\,\rho\W J\rho = 2\,\tomega^3$ hold. 
On the other hand, a G$_2$-structure which is invariant under a free $\mathbb{S}^1$-action on a 7-manifold $M$ induces an SU(3)-structure on the orbit space $M/\mathbb{S}^1$ (see \cite{ApSa}).  

\smallskip

A $\G_2$-structure is said to be {\em closed} if the defining 3-form  $\varphi$ satisfies the condition $d \varphi  = 0$. 
If a closed G$_2$-structure $\varphi$ is also coclosed, i.e., $d*_\f\f=0$, then the induced metric $g_{\varphi}$ is Ricci-flat, and the Riemannian holonomy group ${\mathrm{Hol}} (g_{\varphi})$ 
is isomorphic to a subgroup of $\G_2$.  In this last case, the $\G_2$-structure is called {\em torsion-free}. 

Examples of 7-manifolds with a closed G$_2$-structure can be obtained taking products of lower dimensional manifolds endowed with suitable geometric structures 
(cf.~e.g.~\cite{ChSa,FeGr,FiYa,FiRa,LaLo}). 
Furthermore, the G$_2$-structure $\f = \pi^*\widetilde{\omega} \W \theta +\pi^*\rho$ on the total space of the $\mathbb{S}^1$-bundle $\pi:M\rightarrow N$  
is closed whenever the 2-form $\tomega$ on $N$ is symplectic and $\rho$ satisfies the condition $d\rho = -\omegaz \W {\tomega}$.  

In the literature, many examples of closed (non torsion-free) G$_2$-structures have been obtained on compact quotients of simply connected Lie groups by co-compact discrete subgroups (lattices). 
The first one was given by Fern\'andez on the compact quotient of a nilpotent Lie group \cite{Fer}.   
In the solvable non-nilpotent case, various examples are currently known, and lots of them satisfy additional meaningful conditions that one can impose on a closed G$_2$-structure, 
see e.g.~\cite{Fer1, FiRa, Fre, Lauret, Lau, LaNi}. 
In these examples, the closed G$_2$-structure on the Lie group $\G$ is left-invariant, and thus it is determined by a G$_2$-structure $\f$ on the Lie algebra $\frg = \mathrm{Lie}(\G)$ which is closed with respect to the 
Chevalley-Eilenberg differential of $\frg$.  Recall that a simply connected Lie group $\G$ admits lattices only if $\frg$ is unimodular \cite{Milnor}. 

The isomorphism classes of nilpotent and unimodular non-solvable Lie algebras admitting closed G$_2$-structures were determined in \cite{CoFe} and \cite{FiRa1}, respectively. 
So far, an analogous classification result for solvable non-nilpotent Lie algebras is missing. In fact, there is a lack of classification results for seven-dimensional solvable non-nilpotent Lie algebras. 

In this paper, we focus on seven-dimensional unimodular Lie algebras with non-trivial center admitting closed G$_2$-structures. 
It follows from \cite{FiRa1} that every Lie algebra of this type must be solvable. 

In Sect.~\ref{ClosedG2CExt}, we characterize the structure of a seven-dimensional Lie algebra $\frg$ with non-trivial center endowed with a closed G$_2$-structure $\f$. 
In detail, we show that it is the central extension of a six-dimensional Lie algebra $\frh$ by means of a closed 2-form $\omegaz\in\Lambda^2\frh^*$, 
and that $\f = \tomega\W \theta +\rho,$ where $\theta$ is a 1-form on $\frg$ satisfying $d\theta=\omegaz$, 
$\rho$ is a definite 3-form on $\frh$ such that $d\rho = -\omegaz\W\tomega$, and $\tomega$ is a symplectic form on $\frh$ that tames the almost complex structure induced by $\rho$ and a suitable orientation. 
If the 2-form $\omegaz$ is symplectic, the 1-form $\theta$ is a contact form on $\frg$ and $(\frg,\theta)$ is the {\em contactization} of $(\frh,\omegaz)$, see \cite{Alek}. 
In this last case, the Lie algebra $\frg$ admits both a closed G$_2$-structure and a contact structure.  
This is reminiscent of the construction involving the result of \cite{BW} that we sketched above. 

As a first consequence of this characterization, we determine all isomorphism classes of nilpotent Lie algebras admitting closed G$_2$-structures that arise as the contactization of a six-dimensional nilpotent 
Lie algebra (see Corollary \ref{CorNil}). 
Furthermore, in Sect.~\ref{ClassSect}, we use the characterization to classify all unimodular Lie algebras with non-trivial center admitting closed G$_2$-structures, up to isomorphism (see Theorem \ref{classification}). 
In addition to the nilpotent ones considered in \cite{CoFe}, we show that there exist eleven non-isomorphic solvable non-nilpotent Lie algebras satisfying the required conditions.  
Among them, only two can be obtained as the contactization of a six-dimensional symplectic Lie algebra. 
The simply connected Lie groups corresponding to some of these Lie algebras admit lattices. We use the results of \cite{Bock} to construct a lattice for two of them (see Remark \ref{latticesconstr}).    
In this way, we obtain new locally homogeneous examples of compact 7-manifolds with a closed G$_2$-structure. 
Finally, as a corollary of the classification result, we show that the abelian Lie algebra and a certain 2-step solvable Lie algebra are the only unimodular Lie algebras with non-trivial center 
admitting torsion-free G$_2$-structures. 

\smallskip

A special class of closed G$_2$-structures that has attracted a lot of attention in recent years is given by the Laplacian solitons. 
A closed G$_2$-structure $\f$ on a 7-manifold $M$ is said to be a {\em Laplacian soliton} if it satisfies the equation 
\begin{equation}\label{LapSolIntro}
\Delta_\f \f = \lambda \f+\mathcal{L}_{X}\f,
\end{equation}
for some real constant $\lambda$ and some vector field $X$ on $M,$ where $\Delta_\f$ denotes the Hodge Laplacian of the metric $g_\f$. 
These G$_2$-structures give rise to self-similar solutions of the G$_2$-Laplacian flow introduced by Bryant in \cite{Bry}, and they are expected to model finite time singularities of this flow 
(see \cite{Lot} for an account of recent developments on this topic). 

Depending on the sign of $\lambda$, a Laplacian soliton is called {\em expanding} ($\lambda>0$), {\em steady} ($\lambda=0$), or {\em shrinking} ($\lambda<0$).  
On a compact manifold, every Laplacian soliton which is not torsion-free must be expanding and satisfy \eqref{LapSolIntro} with $\mathcal{L}_{X} \varphi \neq 0$, see \cite{Lin, LoWe}.  
The existence of non-torsion-free Laplacian solitons on compact manifolds is still an open problem. 

In the non-compact setting, examples of Laplacian solitons of any type are known, see e.g.~\cite{Ball, FiRa, FiRa2, Fow, Lauret, Lau, LaNi, Nicolini}.  
In particular, the steady solitons in \cite{Ball} and the shrinking soliton in \cite{Fow} are inhomogeneous and of {\em gradient type}, i.e., $X$ is a gradient vector field. 
As for the known homogeneous examples, they consist of simply connected Lie groups $\G$ endowed with a left-invariant closed G$_2$-structure satisfying the equation \eqref{LapSolIntro} 
with respect to a vector field $X$ defined by a one-parameter group of automorphisms induced by a derivation $D$ of the Lie algebra $\frg = \mathrm{Lie}(\G)$. 
According to \cite{Lauret}, these solitons are called {\em semi-algebraic}. 

In Sect.~\ref{SolitonsSect}, we consider semi-algebraic solitons on unimodular Lie algebras with non-trivial center. 
Under a natural assumption on the derivation $D$, we are able to relate the constant $\lambda$ in \eqref{LapSolIntro} to a certain eigenvalue of $D$ and to the norm of the intrinsic torsion form of 
the semi-algebraic soliton $\f$, namely the unique 2-form $\tau$ such that $d*_\f\f = \tau\W\f = -*_\f\tau$. 
Moreover, we show that $\lambda$ coincides with the squared norm of $\tau$ whenever the Lie algebra is the contactization of a symplectic one (see Corollary \ref{lambdacont}). 
In this last case, the semi-algebraic soliton must be expanding. 
We also prove the non-existence of semi-algebraic solitons on certain Lie algebras with one-dimensional center, and we obtain the classification of all unimodular Lie algebras with center of 
dimension at least two that admit semi-algebraic solitons, up to isomorphism (see Theorem \ref{SASClassZ2}).


\section{Preliminaries}\label{SectPrel}

\subsection{Definite forms and geometric structures in six and seven dimensions}\label{IntroDef}
In this section, we review the properties of the geometric structures defined by a differential form satisfying one of the following conditions. 
\begin{definition} \ 
\begin{enumerate}[-]
\item a 3-form $\rho$ on a six-dimensional vector space $V$ is said to be {\em definite} if for each non-zero vector $v\in V$ the contraction $\iota_v\rho$ has rank four;
\item a 3-form $\varphi$ on a seven-dimensional vector space $W$ is said to be {\em definite} if for each non-zero vector $w\in W$ the contraction $\iota_w\varphi$ has rank six. 
\end{enumerate}
\end{definition}

Let $V$ be a six-dimensional vector space, choose an orientation $\Omega\in\Lambda^6V^*$ on it, and let $\rho\in\Lambda^3V^*$ be a definite 3-form. 
Then, the pair $(\rho,\Omega)$ gives rise to a complex structure $J=J_{\rho,\Omega}$ on $V$ as follows. 
Consider the endomorphism $S_\rho:V\rightarrow V$ defined via the identity
\[
\iota_v\rho\wedge\rho\wedge \eta = \eta(S_\rho(v))\Omega, 
\]
for all $\eta\in V^*$. Then $S_\rho^2=P(\rho) \mathrm{Id}_V$ for some quartic polynomial $P(\rho)$, and $\rho$ is definite precisely when $P(\rho)<0$. 
The complex structure $J=J_{\rho,\Omega}$ is given by $J\coloneqq (-P(\rho))^{-1/2}\,S_\rho$.

If $\omega$ is a non-degenerate 2-form on $V$ such that $\omega\W \rho=0$, and $J$ is the complex structure induced by $\rho$ and the orientation $\omega^3$, 
then the bilinear form $g\coloneqq \omega(\cdot,J\cdot)$ is symmetric and non-degenerate. 
In this case, the pair $(\omega,\rho)$ defines an SU(3)-structure if and only if $g$ is positive-definite and $3\,\rho\W J\rho = 2\,\omega^3$.  

In what follows, we shall denote an SU(3)-structure by the usual notation $(\omega,\psip)$. Moreover, we denote by $\psim\coloneqq J\psip = \psip(J\cdot,J\cdot,J\cdot)$ the imaginary part 
of the complex $(3,0)$-form $\psip + i\psim$, and by $\vol_g = \frac16\omega^3$ the volume form corresponding to the metric $g$.  
Finally, the Hodge operator induced by $g$ and the orientation $\vol_g$ will be denoted by $*_g$. 

The following identities will be useful in the sequel. The reader may refer to \cite[Lemma 3.7]{FiRa} for a proof. 
\begin{lemma}\label{su3identities}
Let $(\omega,\psip)$ be an $\SU(3)$-structure on a six-dimensional vector space $V,$ and let $\alpha\in V^*$. Then,
\begin{enumerate}[{\rm i)}]
\item\label{idi} $*_g(\alpha\W\psim)\W\omega =J\alpha\W\psip = \alpha\W\psim$;
\item\label{idii} $*_g(\alpha\W\psim)\W\omega^2 = 0$;
\item\label{idiii} $*_g(\alpha\W\psim)\W\psip = -*_g(\alpha\W\psip)\W\psim =  \alpha\W\omega^2=2*_g(J\alpha)$;
\item\label{idiv} $*_g(\alpha\W\psim)\W\psim = *_g(\alpha\W\psip)\W\psip  = -J\alpha\W\omega^2=2*_g\alpha$.
\end{enumerate}
\end{lemma}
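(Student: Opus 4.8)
The plan is to reduce all four identities to a single explicit linear-algebra computation in a standard adapted coframe. First I would note that each identity is \emph{natural}: for fixed $(\omega,\psip)$ both sides of every equation are linear in $\alpha$ and are assembled solely from $\omega$, $\psip$, $\psim=J\psip$, $J$ and $*_g$, all of which are canonically determined by the $\SU(3)$-structure. Consequently both sides transform equivariantly under the stabilizer $\SU(3)$ of the structure, and since this group acts transitively on the unit sphere of $V^*$, it suffices to check each equality for a single nonzero covector; by linearity I may then take $\alpha=e^1$, where $e^1,\dots,e^6$ is the orthonormal coframe of the standard model.

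Explicitly, I would fix
\[
\omega=e^{12}+e^{34}+e^{56}, \qquad \psip=e^{135}-e^{146}-e^{236}-e^{245}, \qquad \psim=e^{136}+e^{145}+e^{235}-e^{246},
\]
which is the $\SU(3)$-structure with $\psip+i\psim=(e^1+ie^2)\W(e^3+ie^4)\W(e^5+ie^6)$, metric $g=\sum_i(e^i)^2$, volume form $\vol_g=e^{123456}$, and complex structure determined by $Je^1=-e^2$; a direct check confirms $\omega\W\psip=0$ and $3\,\psip\W J\psip=2\,\omega^3$, so this is genuinely the normalized model. The computation then rests on the two auxiliary Hodge duals
\[
*_g(e^1\W\psim)=e^{35}-e^{46}, \qquad *_g(e^1\W\psip)=-e^{36}-e^{45},
\]
which I obtain by expanding $e^1\W\psim$ and $e^1\W\psip$ and taking complements against $\vol_g$.

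With these in hand, each of the four identities reduces to wedging the relevant $5$-form with $\omega$, $\omega^2$, $\psip$, or $\psim$ and matching the outcome against $e^1\W\psim$, $Je^1\W\psip$, $e^1\W\omega^2$, $2*_g(Je^1)$, and $2*_g e^1$. For instance, wedging $*_g(e^1\W\psim)=e^{35}-e^{46}$ with $\omega$ returns $e^{1235}-e^{1246}=e^1\W\psim$, giving i); wedging the same $5$-form with $\omega^2=2(e^{1234}+e^{1256}+e^{3456})$ yields $0$ since every surviving monomial shares an index, giving ii); and wedging with $\psip$ (respectively $\psim$) produces $2\,e^{13456}=e^1\W\omega^2=2*_g(Je^1)$ (respectively $2\,e^{23456}=2*_g e^1$), giving iii) and iv). The remaining equalities in iii) and iv) involving $*_g(e^1\W\psip)$ follow identically. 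I expect the only real obstacle to be bookkeeping: keeping the orientation convention, the sign $Je^1=-e^2$, and the permutation signs in the Hodge duals mutually consistent throughout, together with the elementary (but worth stating) justification that transitivity of $\SU(3)$ on unit covectors legitimately reduces the whole verification to the case $\alpha=e^1$.
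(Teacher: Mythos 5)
Your proposal is correct, and every computation in it checks out: the model coframe you fix is exactly the adapted basis the paper itself uses later (in the proof of Lemma \ref{EndPsi}), your two auxiliary duals $*_g(e^1\W\psim)=e^{35}-e^{46}$ and $*_g(e^1\W\psip)=-e^{36}-e^{45}$ are right with the orientation $\vol_g=e^{123456}$, and the four wedge evaluations give $e^1\W\psim$, $0$, $2\,e^{13456}$ and $2\,e^{23456}$ as you claim, consistent with $Je^1=-e^2$. Note, however, that the paper does not prove this lemma at all: it simply cites \cite[Lemma 3.7]{FiRa}, where the identities are established by direct computation in an $\SU(3)$-adapted coframe. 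Your argument is essentially that computation, but with one genuine refinement: instead of verifying the identities for all six basis covectors (which plain linearity would require), you invoke equivariance of both sides under the stabilizer $\SU(3)$ together with transitivity of $\SU(3)$ on the unit sphere $S^5\subset V\cong\C^3$ to reduce to the single covector $\alpha=e^1$. That reduction is legitimate — both sides of each identity are linear in $\alpha$ and built only from $\omega$, $\psip$, $\psim$, $J$, $*_g$, all fixed by $\SU(3)$, so agreement at one nonzero covector propagates to its orbit and then, by homogeneity, to all of $V^*$ — and it buys a sixfold reduction in bookkeeping at the cost of one representation-theoretic observation that is worth stating explicitly, as you do.
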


\smallskip
    
A definite 3-form $\varphi$ on a seven-dimensional vector space $W$ defines a G$_2$-structure on it. In detail, $\varphi$ gives rise to a symmetric bilinear map 
\begin{equation}\label{bphi}
b_\varphi : W \times W \rightarrow \Lambda^7W^*, \quad b_\varphi(v,w) = \frac16\, \iota_v\varphi \wedge \iota_w \varphi \wedge \varphi, 
\end{equation}
the 7-form $\det(b_\varphi)^{1/9}$ is different from zero, $g_\varphi\coloneqq \det(b_\varphi)^{-1/9} \, b_\varphi$ is an inner product on $W,$ 
and $\mathrm{vol}_{g_\f} = \det(b_\varphi)^{1/9}$ is the corresponding volume form. The Hodge operator induced by $g_\f$ and $\vol_{g_\f}$ will be denoted by $*_\f$. 

\medskip
Now, consider a seven-dimensional vector space $W,$ and let $\varphi$ be a 3-form on it. 
Choose a non-zero vector $z\in W$ and a complementary subspace $V\subset W$ so that $W \cong V \oplus \mathbb{R}z$. Then, we can write 
\[
\varphi = \tomega \wedge \theta + \rho, 
\]
where $\theta\in W^*$ is the dual of $z$, and $\tomega\in\Lambda^2V^*$, $\rho\in\Lambda^3V^*$. 
The 3-form $\varphi$ on $W$ is definite if and only if the 3-form $\rho$ on $V$ is definite and $\tomega$ is a taming form for the complex structure $J$ induced by $\rho$ 
and one of the two orientations of $V$, namely $\tomega(v,Jv)>0$ for every non-zero vector $v\in V$.

\begin{remark}\label{RemDef}
If $\f= \tomega \wedge \theta + \rho$ is definite, then the 2-form $\tomega=\iota_z\f|_V$ on $V$ has rank six, namely it is a non-degenerate 2-form. 
The pair $(\tomega,\rho)$ defines an SU(3)-structure on $V,$ up to a suitable normalization, if and only if $\tomega\W\rho=0$. 
When this happens, the vector space $V$ coincides with the $g_\f$-orthogonal complement of $\R z\subset W.$
\end{remark}

On the other hand, if $\f$ defines a G$_2$-structure on $W,$ we can consider the six-dimensional subspace $U\coloneqq (\R z)^\perp\subset W$ 
and the $g_\f$-orthogonal splitting $W = U \oplus \R z$. 
Then, if we let $u\coloneqq |z|_\f = g_\f(z,z)^{1/2}$ and $\eta\coloneqq u^{-2} z^\flat$, so that $\eta(z)=1$, we have
\[
\f = u\,\omega\W\eta+\psip,\quad *_\f\f = \frac{1}{2}\,\omega\W\omega + u\,\psim\W\eta, \quad g_\f = g + u^2\eta\otimes\eta,  
\]
and the pair $(\omega,\psip)$ defines an SU(3)-structure on $U$ inducing the metric $g$. In particular, $\mathrm{vol}_{g_\f} = u\,\mathrm{vol}_g \W \eta$. 
Since the vector subspaces $V$ and $U$ are isomorphic, there exists an SU(3)-structure on $V$ corresponding to $(\omega,\psip)$ via the identification $V\cong U$. 
We shall denote this SU(3)-structure using the same symbols. 
It follows from the discussion in Remark \eqref{RemDef} that $V$ and $U$ coincide if and only if $\tomega\W\rho=0.$ In such a case, $\eta$ and $\theta$ coincide, too.

\begin{remark}\label{remcontsu3}\ 
\begin{enumerate}[1)]
\item The structures $(\tomega,\rho)$ and $(\omega,\psip)$ on $V$ are related as follows. 
On $W = V \oplus \R z$, we have $\f= \tomega \wedge \theta + \rho$ and $\f= u\,\omega\W\eta+\psip$. Thus, $\tomega = \iota_z\f = u\,\omega$. 
Moreover, since $\eta(z)=1$, we can consider the decomposition $\eta=\eta_V+\theta$, where $\eta_V \in V^*$, and see that 
\[
\rho = u\,\omega\W\eta_V + \psip.
\]
\item Let $\{e_1,\ldots,e_6,e_7\}$ be a basis of $W = V\oplus \R z$ with $V = \span\{e_1,\ldots,e_6\}$ and $e_7=z$, and denote by $\{e^1,\ldots,e^6\}$ the dual basis of $V^*$. 
Then, a basis of $U=(\R e_7)^\perp$ is given by $\left\{e_k -\frac{g_\f(e_k,e_7)}{u^2} e_7, k=1,\ldots,6 \right\}$. Consequently, $\{e^1,\ldots,e^6\}$ is a basis of $U^*$.  
\end{enumerate}
\end{remark}

\subsection{SU(3)- and G$_{\mathbf2}$-structures on Lie algebras}\label{SU3G2LieAlg} 
We now focus on SU(3)- and G$_2$-structures on Lie algebras. 
It is well-known that an SU(3)-structure on a six-dimensional Lie algebra $\frh$ gives rise to a left-invariant SU(3)-structure on every Lie group corresponding to $\frh$. 
Conversely, a left-invariant SU(3)-structure $(\omega,\psip)$ on a six-dimensional Lie group $\mathrm{H}$ is determined by the SU(3)-structure $\left.(\omega,\psip)\right|_{1_{\mathrm{H}}}$ 
on $T_{1_{\mathrm{H}}}\mathrm{H}\cong \frh$. An analogous correspondence holds for $\G_2$-structures on seven-dimensional Lie algebras and left-invariant $\G_2$-structures on 
seven-dimensional Lie groups. 

Consider a six-dimensional Lie algebra $\frh$ endowed with an SU$(3)$-structure $(\omega,\psip)$. 
The natural action of SU$(3)$ on the space of $k$-forms $\Lambda^k\frh^*$, for $k=2,3$, gives rise to the following splittings (cf.~\cite{BeVe,ChSa}) 
\[
\Lambda^2\frh^* =  \mathbb{R}\omega\oplus\Lambda^2_6\frh^*\oplus\Lambda^2_8\frh^*, \quad  
\Lambda^3 \frh^*=\mathbb{R}\psip \oplus\mathbb{R}\psim \oplus\Lambda^3_6\frh^*\oplus\Lambda^3_{12}\frh^*,
\]  
where the irreducible $r$-dimensional SU(3)-modules $\Lambda^k_r\frh^*$ are defined as follows
\[
\Lambda^2_6\frh^*  = \left\{ \sigma \in \Lambda^2 \frh^* \, \lvert \, J\sigma=-\sigma \right\}, \quad
\Lambda^2_8 \frh^* = \left\{ \beta \in \Lambda^2 \frh^* \, \lvert \, J\beta=\beta, ~ \beta\wedge \omega^2=0 \right\},
\]
and
\[
\Lambda^3_6\frh^* = \left\{ \alpha\wedge \omega \, \lvert \, \alpha\in \frh^* \right\}, \quad
\Lambda^3_{12}\frh^* = \left\{\gamma \in \Lambda^3 \frh^* \, \lvert \, \gamma\wedge \omega=0,  ~ \gamma\wedge \psi_{\sst\pm}=0 \right\}.
\]
The SU(3)-irreducible decomposition of the space $\Lambda^4\frh^*$ follows from that of $\Lambda^2\frh^*$ via the identity $\Lambda^4\frh^* = *_g \left( \Lambda^2\frh^*\right)$. 
Notice that every 2-form $\sigma \in \Lambda^2_6\frh^*$ satisfies the identity $\sigma \wedge \omega = *_g \sigma$, 
and every 2-form $\beta \in \Lambda^2_8\frh^* $ satisfies the identity $\beta \wedge \omega = -*_g\beta$.

Let $d$ denote the Chevalley-Eilenberg differential of $\frh$. 
In view of the previous decompositions, there exist unique $w_0^+, w_0^- \in\R$, $\nu_1,w_1\in\frh^*$,  $w_2^+, w_2^- \in \Lambda^2_8\frh^*$, $w_3\in\Lambda^3_{12}\frh^*$, such that 
\[
\begin{split}
d\omega 	&= -\frac32w_0^-\psip +\frac32 w_0^+\psim + w_3 + \nu_1\W\omega,\\
d\psip 	&=w_0^+\omega^2 +w_2^+\W\omega + w_1\W\psip,\\
d\psim 	&= w_0^-\omega^2 +w_2^-\W\omega +Jw_1\W\psip,
\end{split}
\]
By \cite{BeVe,ChSa}, the intrinsic torsion of an SU(3)-structure $(\omega,\psip)$ is completely determined by these forms, which are thus called the {\em intrinsic torsion forms} of $(\omega,\psip)$. 

A similar result holds for G$_2$-structures, see \cite{Bry,FeGr}. Here, we shall focus on seven-dimensional Lie algebras $\frg$ endowed with a {\em closed} G$_2$-structure $\f$. 
In such a case, $d\f=0$, $d$ being the Chevalley-Eilenberg differential of $\frg$, 
and the intrinsic torsion of $\f$ is determined by the unique 2-form $\tau \in \Lambda^2_{14}\frg^*\coloneqq \{\alpha \in \Lambda^2 \frg^* \, \lvert \, \alpha \wedge \f =  -*_\f\alpha \}$ such that 
\[
d*_{\varphi}\varphi=\tau\wedge \varphi.
\]
In particular, a closed G$_2$-structure is torsion-free if and only if $\tau=0$. 
We shall refer to $\tau$ as the {\em intrinsic torsion form} of the closed G$_2$-structure $\f$.

\subsection{The contactization of a symplectic Lie algebra}\label{SectCont} 
Let $\frh$ be a Lie algebra of dimension $n\geq2$ and denote by $[\cdot , \cdot]_{\frh}$ its Lie bracket.   
Consider a 2-form $\omegaz\in\Lambda^2\frh^*$ that is closed with respect to the Chevalley-Eilenberg differential $d_\frh$ of $\frh$.   
The {\em central extension} of $(\frh,\omegaz)$ is the real Lie algebra of dimension $n+1$ defined as follows: consider the vector space 
\[
\frg \coloneqq \frh \oplus \R z, 
\]
and endow it with the Lie bracket
\begin{equation}\label{BracketCE}
 [z,\frh]=0, \qquad [x,y] = - \omegaz (x,y)z+ [x,y]_{\frh},~\forall x,y \in \frak h. 
\end{equation}
It is clear from this definition that the vector $z$ belongs to the center of $\frg$. 
More precisely, the center of $\frg$ is given by
\[
\frz(\frg) = \left(\frz(\frh) \cap \mathrm{Rad}(\omegaz) \right) \oplus \R z,
\]
where $ \mathrm{Rad}(\omegaz) \coloneqq \left\{x \in \mathfrak{h} \st \omegaz(x,y)=0,~\forall\,  y \in \mathfrak{h}\right\}$. 
Note that the central extension of $(\frh,\omegaz)$ only depends on the cohomology class $[\omegaz]\in H^2(\frh)$. Indeed, different representatives of $[\omegaz]$ give rise to isomorphic 
central extensions.  

In the following, we shall denote by $\theta\in\frg^*$ the dual covector of $z$ uniquely defined by the conditions $\theta (z) = 1$ and $\theta|_\frh=0$. 

Let $d$ denote the Chevalley-Eilenberg differential of $\frg$. Then, $d \theta$ defines an exact $2$-form on $\frg$ that coincides with $\omegaz$ on $\frh$ and satisfies $\iota_z d\theta=0$. 
Thus, we can write $d\theta = \omegaz$ on $\frg$ by extending $\omegaz$ to $\frg$ via the condition $\iota_z \omegaz=0$. 
When $\omegaz$ is zero, the Lie algebra $\frg$ is just the direct sum of $\frh$ and the abelian Lie algebra $\R$. 

\smallskip

When $(\frh, \omegaz)$ is a symplectic Lie algebra of dimension $2n$, the previous construction gives rise to a contact Lie algebra $(\frg, \theta)$ of dimension $2n+1$ with center $\frz(\frg)=\R z$. 
Indeed, $\theta\in\frg^*$ satisfies the condition $(d\theta)^n\W\theta\neq0$, and so it is a contact form on $\frg$ (see \cite{Alek}). 
Moreover, $\mathrm{Rad}(\omegaz)=\{0\}$, whence $\frz(\frg)=\R z$. 
Notice that $z$ is the {\em Reeb vector} of the contact structure $\theta$, as $\theta(z)=1$. 
This motivates the following. 

\begin{definition} 
The contact Lie algebra $(\frg,\theta)$ obtained from the symplectic Lie algebra $(\frh,\omegaz)$ via the construction described above is called the {\em contactization} of $(\frh,\omegaz)$. 
\end{definition}

It is easy to characterize contact Lie algebras arising as the contactization of a symplectic Lie algebra, as the next result shows (see also \cite{Di}).
\begin{proposition}  
A contact Lie algebra $(\frg,\theta)$ is the contactization of a symplectic Lie algebra $(\frh,\omegaz)$ if and 
only if the center $\frz(\frg)$ of $\frg$ is not trivial.  
\end{proposition}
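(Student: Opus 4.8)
The plan is to treat the two implications asymmetrically, since the forward direction is essentially built into the construction. If $(\frg,\theta)$ is the contactization of a symplectic Lie algebra $(\frh,\omegaz)$, then the central vector $z$ of the central extension is nonzero and lies in $\frz(\frg)$ by the bracket \eqref{BracketCE}, so $\frz(\frg)\neq0$. This requires nothing beyond recalling the construction. The substance is the converse, where the whole point is to recover $z$, $\frh$, and $\omegaz$ intrinsically from the contact data.

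First I would exploit the contact condition $(d\theta)^n\W\theta\neq0$ on the $(2n+1)$-dimensional space $\frg$: it forces $(d\theta)^n\neq0$, so the skew form $d\theta$ has maximal (even) rank $2n$, and hence a one-dimensional radical. Next I would show that a nonzero central element $z\in\frz(\frg)$ must span this radical. Indeed, for the Chevalley--Eilenberg differential one has $d\theta(z,x)=-\theta([z,x])$ for all $x\in\frg$, so $z$ central gives $\iota_z d\theta=0$; thus $z$ lies in the one-dimensional radical of $d\theta$. To see that $z$ is transverse to the contact hyperplane, I would contract the volume form: since $\iota_z(d\theta)^n = n(\iota_z d\theta)\W(d\theta)^{n-1}=0$, the antiderivation property gives $\iota_z((d\theta)^n\W\theta)=\theta(z)\,(d\theta)^n$, and contracting a nonzero top form with $z\neq0$ is nonzero, so $\theta(z)\neq0$. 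Rescaling, I may assume $\theta(z)=1$, so that $z$ is precisely the Reeb vector.

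With $z$ in hand, I would set $\frh\coloneqq\frg/\R z$ (meaningful because $\R z$ is a central, hence abelian, ideal) and let $\pi\colon\frg\to\frh$ be the projection. Since $\iota_z d\theta=0$, the form $d\theta$ descends to a $2$-form $\omegaz$ on $\frh$ with $\pi^*\omegaz=d\theta$. Because $\pi^*$ is an injective chain map, $\pi^*(d_\frh\omegaz)=d_\frg d\theta=0$ forces $d_\frh\omegaz=0$, while $\pi^*(\omegaz^n)=(d\theta)^n\neq0$ forces $\omegaz^n\neq0$; hence $(\frh,\omegaz)$ is symplectic. Finally I would take the vector-space splitting $s\coloneqq(\pi|_{\ker\theta})^{-1}\colon\frh\to\frg$, whose image is the contact hyperplane $\ker\theta$ (the restriction is an isomorphism because $\theta(z)=1$ makes $\R z\cap\ker\theta=0$). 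For $x,y\in\frh$ one has $\pi[s(x),s(y)]=[x,y]_\frh$, so $[s(x),s(y)]=s([x,y]_\frh)+\theta([s(x),s(y)])\,z$; and $\theta([s(x),s(y)])=-d\theta(s(x),s(y))=-\omegaz(x,y)$. Together with $[z,\cdot]=0$, this is exactly \eqref{BracketCE}, identifying $(\frg,\theta)$ with the contactization of $(\frh,\omegaz)$.

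The only delicate step is the middle one, namely forcing a central element to be the Reeb vector, and it rests entirely on the rank-and-transversality computation above; once $z$ is identified, the descent of $d\theta$ and the matching of brackets are routine. Throughout I would keep careful track of the Chevalley--Eilenberg sign convention $d\alpha(x,y)=-\alpha([x,y])$, since the sign appearing in \eqref{BracketCE} depends on it.
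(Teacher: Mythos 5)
Your proof is correct, and its overall architecture matches the paper's (trivial forward direction; for the converse, show the central vector is the Reeb vector up to scale, then reconstruct the symplectic base), but it differs in two genuine ways. First, where the paper simply cites \cite[Prop.~1]{AFV} for the fact that a non-trivial center of a contact Lie algebra is one-dimensional and spanned by the Reeb vector, you prove this inline: the contact condition forces $d\theta$ to have rank $2n$ and hence a one-dimensional radical, any central $z$ lies in that radical because $\iota_z d\theta=-\theta([z,\cdot])=0$, and the contraction identity $\iota_z\bigl((d\theta)^n\W\theta\bigr)=\theta(z)\,(d\theta)^n$ yields the transversality $\theta(z)\neq0$. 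This makes your argument self-contained at the cost of a few extra lines of linear algebra. Second, you realize $\frh$ as the quotient $\frg/\R z$ and then split it back into $\frg$ via $(\pi|_{\ker\theta})^{-1}$, whereas the paper works directly with the hyperplane $\frh=\ker\theta$ endowed with the modified bracket $[x,y]_\frh=[x,y]-\theta([x,y])z$ and sets $\omegaz=d\theta|_{\frh\times\frh}$; the two constructions are canonically isomorphic, but the quotient picture gives you closedness and non-degeneracy of $\omegaz$ essentially for free from injectivity of the chain map $\pi^*$, while the paper verifies $d_\frh\omegaz=0$ and $(d\theta)^n\neq0$ by hand on $\ker\theta$. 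What your route buys is independence from the external reference and an explicit matching with the central-extension bracket \eqref{BracketCE}; what the paper's route buys is brevity, since defining the bracket directly on $\ker\theta$ avoids the bookkeeping with the splitting $s$.
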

\begin{proof} 
If $(\frg,\theta)$ is the contactization of a symplectic Lie algebra $(\frh,\omegaz)$, then the assertion is true. 
Conversely, let us assume that $(\frg,\theta)$ is a contact Lie algebra of dimension $2n+1$ with non-trivial center. 
Then, $\frz(\frg)$ is one-dimensional and it is spanned by the Reeb vector $z$ of the contact structure $\theta$ (cf.~\cite[Prop.~1]{AFV}).   
Consequently, we can consider the decomposition $\frg = \frh\oplus\R z$, where the $2n$-dimensional subspace $\frh\coloneqq\ker \theta$ is a Lie algebra with respect to the bracket  
\[
[x,y]_\frh \coloneqq [x,y] - \theta\left([x,y]\right)z,\quad x,y\in\frh.
\]
Let $\omegaz$ be the 2-form on $\frh$ defined as $\omegaz(x,y) = d\theta(x,y)$, for all $x,y\in\frh$. 
A direct computation shows that $\omegaz$ is closed with respect to the Chevalley-Eilenberg differential $d_\frh$ of $\frh$. Moreover, $\omegaz$ is non-degenerate. 
Indeed, $(d\theta)^n\W\theta$ is a volume form on $\frg$ and contracting it with $z$ gives $(d\theta)^n\neq0$, as  $\theta(z)=1$ and $\iota_z d\theta  = -\theta([z,\cdot])  = 0$. 
Therefore, $(\frg,\theta)$ is the contactization of the symplectic Lie algebra $(\frh,\omegaz)$. 
\end{proof}


\section{Closed G$_2$-structures on central extensions of Lie algebras}\label{ClosedG2CExt}

In this section, we investigate the structure of a seven-dimensional Lie algebra with non-trivial center endowed with a closed G$_2$-structure. We begin with the following. 

\begin{proposition}\label{ClosedContactization} 
Let $\frh$ be a six-dimensional Lie algebra and let $\omegaz$ be a closed 2-form on it.  
Assume that $\frh$ admits a definite 3-form $\rho$ and a symplectic form $\tomega$ such that 
\begin{enumerate}[a)]
\item $\tomega$ is a taming form for the almost complex structure $J_\rho$ on $\mathfrak{h}$ induced by $\rho$ and one of the two orientations of $\frh$; 
\item $d  \rho = - \tomega \wedge \omegaz$. 
\end{enumerate}
Then, the seven-dimensional Lie algebra $\mathfrak{g} \coloneqq \frh \oplus\R z$ obtained as the central extension of  $(\mathfrak{h},\omegaz)$ is endowed with 
a closed $\G_2$-structure defined by the 3-form 
\[
\varphi =  \tomega  \wedge \theta + \rho. 
\]
\end{proposition}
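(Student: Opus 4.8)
The plan is to verify the two defining properties of a closed G$_2$-structure: first that $\f = \tomega\W\theta+\rho$ is a \emph{definite} 3-form on $\frg$ (so that it actually defines a G$_2$-structure), and second that $d\f = 0$ with respect to the Chevalley--Eilenberg differential of $\frg$.

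For the first part, I would invoke the decomposition result stated in the excerpt just before Remark~\ref{RemDef}. Writing $\frg = \frh\oplus\R z$ with $\theta$ dual to $z$, the general 3-form $\f = \tomega\W\theta+\rho$ is definite on $\frg$ \emph{if and only if} $\rho$ is a definite 3-form on $\frh$ and $\tomega$ is a taming form for the almost complex structure $J_\rho$ induced by $\rho$ and one of the two orientations of $\frh$. These are precisely hypothesis (a) together with the standing assumption that $\rho$ is definite. So this part is essentially a citation of the normal-form discussion already established; I would simply note that the hypotheses of the proposition match the stated criterion verbatim, and conclude that $\f$ defines a G$_2$-structure on $\frg$.

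For the closedness, I would compute $d\f$ directly. The key input is the structure of the differential on the central extension. On $\frh\subset\frg$ the Chevalley--Eilenberg differential restricts to $d_\frh$, and the dual covector $\theta$ satisfies $d\theta = \omegaz$ (with $\omegaz$ extended by $\iota_z\omegaz = 0$), as established in Section~\ref{SectCont}. Therefore, using the Leibniz rule and $\tomega\in\Lambda^2\frh^*$, I would expand
\[
d\f = d(\tomega\W\theta) + d\rho = d\tomega\W\theta - \tomega\W d\theta + d\rho.
\]
Since $\tomega$ is symplectic, $d\tomega = 0$, so the first term vanishes. Substituting $d\theta = \omegaz$ and using hypothesis (b), namely $d\rho = -\tomega\W\omegaz$, the remaining terms cancel:
\[
d\f = -\tomega\W\omegaz + d\rho = -\tomega\W\omegaz - \tomega\W\omegaz\cdot 0\ \Longrightarrow\ d\f = -\tomega\W\omegaz -(-\tomega\W\omegaz)= 0.
\]
(Here I am being careful that $d$ on $\frg$ agrees with $d_\frh$ on forms pulled back from $\frh$, which is exactly what the construction in Section~\ref{SectCont} guarantees.)

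The one subtlety I would watch for, and which is the closest thing to an obstacle, is making sure the exterior derivatives are computed with the differential of $\frg$ rather than of $\frh$: the bracket \eqref{BracketCE} introduces a $z$-component, so in principle $d_\frg$ could differ from $d_\frh$ on $\Lambda^\bullet\frh^*$. The resolution is that for a \emph{form} $\alpha\in\Lambda^k\frh^*\subset\Lambda^k\frg^*$, the discrepancy $d_\frg\alpha - d_\frh\alpha$ is controlled precisely by the $\omegaz$-correction to the bracket and is absorbed into the $\theta$-terms; for $\tomega$ and $\rho$ the clean statement $d\tomega=0$, $d\rho = -\tomega\W\omegaz$ already holds as stated on $\frg$ once one remembers $d\theta=\omegaz$. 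Since the proposition's hypotheses are phrased intrinsically on $\frh$ with $\omegaz$ closed, everything is consistent, and the computation above goes through without further conditions.
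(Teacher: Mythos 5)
Your overall strategy is exactly the paper's: invoke the pointwise criterion (definite $\rho$ plus taming $\tomega$ is equivalent to definiteness of $\tomega\W\theta+\rho$) to get the G$_2$-condition, then verify $d\f=0$ from $d\tomega=0$, $d\theta=\omegaz$ and hypothesis (b). The first part and your side remark about $d_\frg$ versus $d_\frh$ are sound: since $z$ is central, $d_\frg$ restricted to $\Lambda^\bullet\frh^*$ coincides with $d_\frh$ (extended by $\iota_z(\cdot)=0$), so the hypotheses stated on $\frh$ can indeed be read on $\frg$.

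However, the closedness computation as written is wrong. The Leibniz rule for the 2-form $\tomega$ gives
\[
d(\tomega\W\theta) = d\tomega\W\theta + (-1)^2\,\tomega\W d\theta = d\tomega\W\theta + \tomega\W d\theta,
\]
with a plus sign, not the minus sign you wrote. Followed literally, your formula yields
\[
d\f = -\tomega\W d\theta + d\rho = -\tomega\W\omegaz - \tomega\W\omegaz = -2\,\tomega\W\omegaz,
\]
which is not zero in general; your final display reaches $0$ only by silently replacing $+d\rho$ with $-d\rho$ in the last step (a second sign error cancelling the first), and the intermediate expression ``$-\tomega\W\omegaz - \tomega\W\omegaz\cdot 0$'' is not meaningful. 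The minus sign in hypothesis (b), $d\rho=-\tomega\W\omegaz$, is calibrated precisely for the correct Leibniz sign: with it one gets
\[
d\f = d\tomega\W\theta + \tomega\W d\theta + d\rho = \tomega\W\omegaz - \tomega\W\omegaz = 0,
\]
which is the paper's one-line computation. So the gap is purely computational and immediately fixable, but as written your verification does not establish the statement.
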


\begin{proof}
The hypothesis on $\rho$ and $\tomega$ guarantee that the 3-form $\varphi =   \tomega  \wedge \theta + \rho$ defines a G$_2$-structure on $\frg = \frh \oplus \R z$. 
Moreover, since $\tomega$ is closed and $\omegaz=d\theta$, we have
\[
d \varphi = d \tomega \wedge \theta + \tomega\wedge d\theta  + d \rho = \tomega\wedge\omegaz+d\rho = 0. \vspace{-0.2cm}
\] 
\end{proof}

The next result is a converse of Proposition \ref{ClosedContactization}. 

\begin{proposition} \label{invClosedContactization}
Let $\frg$ be a seven-dimensional Lie algebra endowed with a closed $\G_2$-structure $\f$.  
Assume that the center of $\frg$ is not trivial, consider a non-zero vector $z\in\frz(\frg)$ and denote by $\theta\in\frg^*$ its dual $1$-form.    
Then, $\frg$ is the central extension of a six-dimensional Lie algebra $(\frh,\omegaz)$, 
and the closed $\G_2$-structure can be written as $\f =  \tomega  \wedge \theta + \rho$,  
where $\rho$ is a definite 3-form on $\frh$,  $\tomega$ is a taming symplectic form for $J_\rho$ and $d  \rho = - \tomega \wedge \omegaz$. 
\end{proposition}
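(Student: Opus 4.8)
The plan is to manufacture the six-dimensional Lie algebra $\frh$ together with the forms $\omegaz$, $\tomega$ and $\rho$ directly from the data $(\frg,\f,z,\theta)$, and then to read off each asserted property from the closedness of $\f$ and from the linear-algebraic facts recalled in Section~\ref{IntroDef}. First I would set $\frh\coloneqq\ker\theta$, a vector-space complement of $\R z$ in $\frg$, and equip it with the bracket $[x,y]_\frh\coloneqq[x,y]-\theta([x,y])z$ obtained by projecting the bracket of $\frg$ along $z$. Since $z$ is central, every correction term $\theta([x,y])[z,w]$ vanishes, so the Jacobi identity for $[\cdot,\cdot]_\frh$ is inherited from that of $\frg$; thus $\frh$ is a genuine six-dimensional Lie algebra and $\frg=\frh\oplus\R z$ is its central extension by the cocycle $\omegaz(x,y)\coloneqq-\theta([x,y])=d\theta(x,y)$. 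Extending $\omegaz$ to $\frg$ by $\iota_z\omegaz=0$ we have $d\theta=\omegaz$ on $\frg$, and since $d(d\theta)=0$ while $d$ agrees with $d_\frh$ on forms pulled back from $\frh$, the cocycle $\omegaz$ is $d_\frh$-closed.

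Next I would decompose $\f$ along the splitting $\frg=\frh\oplus\R z$ by setting $\tomega\coloneqq\iota_z\f$ and $\rho\coloneqq\f-\tomega\W\theta$. Using $\theta(z)=1$ and $\iota_z\iota_z=0$ one checks $\iota_z\tomega=0$ and $\iota_z\rho=0$, so both are horizontal, namely $\tomega\in\Lambda^2\frh^*$ and $\rho\in\Lambda^3\frh^*$, and by construction $\f=\tomega\W\theta+\rho$. At this point the purely pointwise statement recalled just before Remark~\ref{RemDef} applies verbatim: because $\f$ is definite and $\theta$ is the dual covector of $z$ for the splitting $\frg=\ker\theta\oplus\R z$, the three-form $\rho$ is definite on $\frh$ and $\tomega$ is a taming form for the almost complex structure $J_\rho$ induced by $\rho$ and one of the two orientations of $\frh$; moreover, by Remark~\ref{RemDef}, $\tomega=\iota_z\f$ has rank six and is therefore non-degenerate.

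It then remains to convert $d\f=0$ into the two differential conditions. The key point is that $z$ central forces its ``Lie derivative'' to vanish, so $\iota_z d=-d\,\iota_z$ on $\Lambda^\bullet\frg^*$. Contracting $d\f=0$ with $z$ gives $0=\iota_z d\f=-d\,\iota_z\f=-d\tomega$, hence $d\tomega=0$; together with the non-degeneracy from the previous paragraph this shows that $\tomega$ is symplectic. Expanding $d\f=d\tomega\W\theta+\tomega\W d\theta+d\rho$ (no sign in front of $\tomega\W d\theta$, as $\tomega$ has even degree) and inserting $d\tomega=0$ and $d\theta=\omegaz$ leaves $0=\tomega\W\omegaz+d\rho$, that is $d\rho=-\tomega\W\omegaz$. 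All three forms here are horizontal, so this is an identity in $\Lambda^4\frh^*$ and holds equally for $d_\frh$; this is exactly condition~b). Collecting the pieces yields every clause of the statement.

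I do not anticipate a serious obstacle. Once the anticommutation $\iota_z d=-d\,\iota_z$ is available, the differential identities drop out by bookkeeping, and the equivalence ``$\f$ definite $\Leftrightarrow$ $\rho$ definite and $\tomega$ taming'' is precisely the content already established in Section~\ref{IntroDef}. The only points that require genuine care are the sign conventions in the Leibniz expansion of $d(\tomega\W\theta)$ and the routine verification that $d$ and $d_\frh$ agree on forms pulled back from $\frh$, which is what allows the equation $d\rho=-\tomega\W\omegaz$ to be read intrinsically on $\frh$ rather than merely on $\frg$.
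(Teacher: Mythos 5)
Your proposal is correct and takes essentially the same route as the paper's own proof: set $\frh=\ker\theta$ with the projected bracket \eqref{bracketh}, decompose $\f=\tomega\W\theta+\rho$ with $\tomega=\iota_z\f$, use the centrality of $z$ (the paper via $0=\mathcal{L}_z\f=d(\iota_z\f)$, you via the equivalent anticommutation $\iota_z d=-d\,\iota_z$) to get $d\tomega=0$, and then expand $d\f=0$ to obtain $d\rho=-\tomega\W\omegaz$, with definiteness of $\rho$ and the taming property of $\tomega$ read off from the linear algebra of Sect.~\ref{IntroDef}. The only differences are that you spell out details the paper leaves implicit (the Jacobi identity for $[\cdot,\cdot]_\frh$, the horizontality of $\tomega$ and $\rho$, and the agreement of $d$ with $d_\frh$ on horizontal forms).
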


\begin{proof} 
Consider the six-dimensional subspace $\frh\coloneqq \ker\theta$ of $\frg$. Then, $\frh$ is a Lie algebra with respect to the bracket 
\begin{equation}\label{bracketh}
[x,y]_\frh \coloneqq [x,y] - \theta\left([x,y]\right)z,\quad x,y\in\frh, 
\end{equation}
and the 2-form $\omegaz\coloneqq d\theta|_{\frh\times\frh}$ on $\frh$ is closed with respect to the Chevalley-Eilenberg differential of $(\frh,[\cdot,\cdot]_\frh)$. 
In particular, $\frg = \frh\oplus \R z$ is the central extension of  $(\frh,\omegaz)$. 
Since $\Lambda^3\frg^* = (\Lambda^2\frh^*\otimes \R \theta) \oplus \Lambda^3 \frh^*$,  
there exist some ${\tomega} \in \Lambda^2\frh^*$, $\rho\in\Lambda^3\frh^*$ such that $\f =  \tomega  \wedge \theta + \rho$. 
Clearly, $\rho$ is a definite 3-form on $\frh$ and $\tomega=\iota_z\f$ is a non-degenerate taming 2-form for $J_\rho$.  
Moreover, ${\tomega}$ is symplectic. Indeed, 
\[
0 = \mathcal{L}_z\f = d(\iota_z\f) = d{\tomega},
\]
as $z\in\frz(\frg)$. Finally, we have
\[
0  =d\f = {\tomega}\W d\theta + d\rho = {\tomega}\W \omegaz + d\rho, 
\]
where the last identity follows from $\iota_z d\theta=0$. 
\end{proof}

\begin{remark}\ 
\begin{enumerate}[1)]
\item It follows from \cite{FiRa1} that every seven-dimensional unimodular Lie algebra with non-trivial center admitting closed $\G_2$-structures is necessarily solvable. 
On the other hand, there exist unimodular solvable centerless Lie algebras admitting closed $\G_2$-structures, see e.g.~\cite[Ex.~4.7]{Lau}. 
\item Any vector $z\in\mathfrak{z}(\frg)$ satisfies $\mathcal{L}_z\f=0$. More generally, if $x\in\frg$ satisfies $\mathcal{L}_x\f=0$, 
then $\mathcal{L}_x g_\f=0$, whence it follows that $\ad_x\in\Der(\frg)$ is skew-symmetric. Consequently, if the Lie algebra $\frg$ is completely solvable, namely the spectrum of $\ad_v$ is real for all $v\in\frg$, 
then every vector $x$ satisfying  $\mathcal{L}_x\f=0$ must belong to the center of $\frg$.    
\end{enumerate}
\end{remark}

Before showing a first consequence of Proposition \ref{invClosedContactization}, we recall some notations that we will use from now on. 
Given a Lie algebra of dimension $n$, we write its structure equations with respect to a basis of covectors $\left(\varepsilon^1,\ldots,\varepsilon^n \right)$
by specifying the $n$-tuple $(d \varepsilon^1,\ldots, d \varepsilon^n)$. 
Moreover, we use the notation $\varepsilon^{ijk\cdots}$ as a shorthand for the wedge product of covectors $\varepsilon^i \W \varepsilon^j \W \varepsilon^k \W \cdots$.  

\begin{corollary}\label{CorNil}
Let $\frg$ be a seven-dimensional nilpotent Lie algebra endowed with a closed $\G_2$-structure $\f$.  
Then $\frg$ is the central extension of a six-dimensional nilpotent Lie algebra $\frh$ admitting symplectic structures. 
Moreover, $\frg$ is the contactization of a six-dimensional symplectic nilpotent Lie algebra  $(\frh, \omegaz)$ if and only if $\frg$ is isomorphic to one of the following:  
\[
\begin{split}
\frn_9	&=(0,0,e^{12},e^{13},e^{23},e^{15} + e^{24},e^{16} + e^{34} + e^{25}),\\
\frn_{10}	&=(0,0,e^{12},0,e^{13} + e^{24},e^{14},e^{46} + e^{34} + e^{15} + e^{23}),\\
\frn_{11}	&=(0,0,e^{12},0,e^{13},e^{24} + e^{23},e^{25} + e^{34} + e^{15} + e^{16} - 3 e^{26}), \\
\frn_{12}	&=(0, 0, 0, e^{12}, e^{23}, -e^{13}, 2 e^{26} - 2 e^{34} -2 e^{16} + 2 e^{25}).
\end{split}
\]
\end{corollary}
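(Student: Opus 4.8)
The plan is to treat the two assertions separately, obtaining the first one directly from the converse statement already established and reducing the ``moreover'' part to a purely Lie-theoretic criterion that can be checked against a known classification.

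For the first sentence I would argue as follows. A nilpotent Lie algebra has non-trivial center, so I may pick a non-zero $z\in\frz(\frg)$ with dual $1$-form $\theta$ and apply Proposition \ref{invClosedContactization}: the algebra $\frg=\frh\oplus\R z$ is the central extension of $(\frh,\omegaz)$ with $\frh=\ker\theta$ and $\omegaz=d\theta|_{\frh\times\frh}$, and the closed $\G_2$-structure decomposes as $\f=\tomega\W\theta+\rho$ with $\tomega=\iota_z\f$ a symplectic form on $\frh$. Since $z$ is central, $\frh\cong\frg/\R z$ as Lie algebras, so $\frh$ is nilpotent as a quotient of a nilpotent Lie algebra, and $\tomega$ exhibits a symplectic structure on it. This settles the first claim for every choice of central $z$, independently of any classification.

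For the ``moreover'' part the key observation I would record is that, for a nilpotent $\frg$, being the contactization of a symplectic (hence automatically nilpotent) Lie algebra is equivalent to the existence of a contact form $\theta\in\frg^*$, i.e.\ a $1$-form with $\theta\W(d\theta)^3\neq0$. Indeed, if such a $\theta$ exists then the characterization of contactizations in Section~\ref{SectCont} applies (the center is non-trivial by nilpotency), $\frh=\ker\theta$ is nilpotent as above, and $\omegaz=d\theta|_\frh$ is symplectic because $(d\theta)^3\neq0$ on $\frh$; conversely a contactization carries $\theta$ by construction. I would then extract the necessary condition $\dim\frz(\frg)=1$: for $w\in\frz(\frg)$ one has $\iota_w\,d\theta=-\theta([w,\cdot])=0$, so $\frz(\frg)$ lies in the radical of $d\theta$, which is one-dimensional whenever $\theta$ is a contact form. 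Next I would invoke the classification of the twelve seven-dimensional nilpotent Lie algebras admitting closed $\G_2$-structures from \cite{CoFe}, of which $\frn_9,\dots,\frn_{12}$ are four members. For the implication $(\Leftarrow)$ I would verify that $\theta=e^7$ is a contact form on each of $\frn_9,\dots,\frn_{12}$: a short computation with the given structure equations shows that $(de^7)^3$ is a non-zero multiple of $e^{123456}$ in every case, so $(de^7)^3\W e^7\neq0$ and each algebra is a contactization by the equivalence above. For $(\Rightarrow)$ I would compute the center of the remaining eight algebras of \cite{CoFe} and check that it has dimension at least two, so that none of them can be a contactization. Since being a contactization is invariant under isomorphism, this shows that exactly $\frn_9,\dots,\frn_{12}$ occur.

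The unavoidable but routine labor is the case analysis over the twelve algebras: confirming $(de^7)^3\neq0$ for the four retained ones and $\dim\frz\geq2$ for the eight excluded ones. The genuinely conceptual point, and the step I expect to be the main obstacle to phrase cleanly, is the reduction of ``$\frg$ is the contactization of a symplectic nilpotent Lie algebra'' to the condition ``$\frg$ admits a contact form'', together with the derivation of the necessary condition $\dim\frz(\frg)=1$; this must be set up carefully so that the necessary condition disposes of the eight algebras while the explicit contact forms $\theta=e^7$ settle the four.
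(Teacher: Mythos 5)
Your first assertion and the reduction of the ``moreover'' part to the existence of a contact form are both sound: nilpotency gives a non-trivial center, so Proposition \ref{invClosedContactization} yields the central-extension statement, and the proposition in Sect.~\ref{SectCont} converts ``$\frg$ is a contactization'' into ``$\frg$ admits a $1$-form $\theta$ with $\theta\W(d\theta)^3\neq0$'' (with $\frh\cong\frg/\R z$ automatically nilpotent). Your verification that $e^7$ is a contact form on $\frn_9,\ldots,\frn_{12}$ also goes through. Note this is a genuinely different route from the paper, which excludes $\frn_1,\frn_2,\frn_3$ by Kutsak's theorem that decomposable nilpotent Lie algebras admit no contact structure, and then compares the list of \cite{CoFe} with Kutsak's classification \cite{Ku} of indecomposable seven-dimensional nilpotent contact Lie algebras; your plan, if completed, would be more self-contained.

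The genuine gap is in your exclusion step: it is \emph{false} that the eight remaining algebras all have center of dimension at least two. The algebra $\frn_8=(0,0,e^{12},e^{13},e^{23},e^{15}+e^{24},e^{16}+e^{34})$ has one-dimensional center $\frz(\frn_8)=\R e_7$ (the paper itself records in Sect.~\ref{SolitonsSect} that $\frn_8$ is among the unimodular Lie algebras with one-dimensional center admitting closed $\G_2$-structures), so your necessary condition $\dim\frz(\frg)=1$ does not dispose of it, and as written your proof would wrongly leave $\frn_8$ as a candidate. A separate argument is needed to show $\frn_8$ carries no contact form. This is fixable by hand: every $\theta\in\frn_8^*$ has $d\theta=a_3e^{12}+a_4e^{13}+a_5e^{23}+a_6\left(e^{15}+e^{24}\right)+a_7\left(e^{16}+e^{34}\right)$, so $(d\theta)^2$ is a linear combination of $e^{1234},e^{1235},e^{1236},e^{1245},e^{1345},e^{1246},e^{1346}$; since $d\theta$ has no component along $e^7$ and none of the complementary pairs $e^{56},e^{46},e^{45},e^{36},e^{26},e^{35},e^{25}$ occurs among its monomials, one gets $(d\theta)^3=(d\theta)^2\W d\theta=0$ for every $\theta$, hence no contact form exists. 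With this computation (or by invoking \cite{Ku}, as the paper does) your argument closes; without it, the proof is incomplete.
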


\begin{proof}  
Since a nilpotent Lie algebra has non-trivial center, the first assertion immediately follows from Proposition \ref {invClosedContactization}. 
By the classification result of \cite{CoFe}, a seven-dimensional nilpotent Lie algebra admitting closed $\G_2$-structures is isomorphic to one of the following:
\[
\begin{split}
&\frn_1 =(0,0,0,0,0,0,0),\\
&\frn_2 = (0,0,0,0,e^{12},e^{13},0),\\
&\frn_3 =(0,0,0,e^{12},e^{13},e^{23},0),\\
&\frn_4 = (0,0,e^{12},0,0,e^{13} + e^{24},e^{15}),\\
&\frn_5 = (0,0,e^{12},0,0,e^{13},e^{14} + e^{25}),\\
&\frn_6 =(0,0,0,e^{12},e^{13},e^{14},e^{15}),\\
&\frn_7 = (0, 0, 0, e^{12}, e^{13}, e^{14} + e^{23}, e^{15}),\\
&\frn_8 = (0, 0, e^{12}, e^{13}, e^{23}, e^{15} + e^{24}, e^{16} + e^{34}),\\
&\frn_9 = (0,0,e^{12},e^{13},e^{23},e^{15} + e^{24},e^{16} + e^{34} + e^{25}),\\
&\frn_{10} = (0,0,e^{12},0,e^{13} + e^{24},e^{14},e^{46} + e^{34} + e^{15} + e^{23}),\\
&\frn_{11} = (0,0,e^{12},0,e^{13},e^{24} + e^{23},e^{25} + e^{34} + e^{15} + e^{16} - 3 e^{26}), \\
&\frn_{12} = (0, 0, 0, e^{12}, e^{23}, -e^{13}, 2 e^{26} - 2 e^{34} -2 e^{16} + 2 e^{25}).
\end{split}
\]

By \cite[Thm.~4.2]{Ku}, a decomposable nilpotent Lie algebra cannot admit any contact structure. 
Consequently, the Lie algebras $\frn_1$, $\frn_2$ and $\frn_3$ cannot be the contactization of any symplectic Lie algebra. 
Seven-dimensional indecomposable nilpotent Lie algebras admitting contact structures have been classified in \cite{Ku}. 
Comparing this classification with the one above, we see that $\frg$ must be isomorphic to one of $\frn_9$, $\frn_{10}$, $\frn_{11}$, $\frn_{12}$. 
For each of these Lie algebras, $\frz(\frn_i) = \R e_7$ and the 2-form $de^7$ induces a symplectic form on the six-dimensional nilpotent Lie algebra $\frh_i \coloneqq \ker e^7$ 
with Lie bracket defined as in \eqref{bracketh}. 
\end{proof}

Let us now consider a seven-dimensional Lie algebra $\frg$ with non-trivial center endowed with a closed G$_2$-structure $\f$. 
Then, from the previous discussion we can assume that $\frg = \frh\oplus \R z$ is the central extension of a six-dimensional Lie algebra $(\frh,\omegaz\coloneqq d\theta|_{\frh\times\frh})$, 
and that $\f = \tomega\W\theta+\rho$, with $d\rho = -\tomega\W\omegaz$ and $d\tomega=0$. 
From Sect.~\ref{SectPrel}, we also know that $\frh$ admits an SU(3)-structure $(\omega,\psip)$ such that $\f = u\,\omega\W\eta+\psip,$ 
where $u\coloneqq |z|_\f$ and $\eta\coloneqq u^{-2} z^\flat = \eta_\frh+\theta$, for some $\eta_\frh\in\frh^*$. 
In particular, $\frh$ is the $g_\f$-orthogonal complement of $\R z$ in $\frg$ if and only if $\eta_\frh=0$. 
It is worth observing that this setting generalizes the one considered in \cite[Sect.~6.1]{FiRa}, which corresponds to the case when both $\eta_\frh=0$ and $\omegaz=0$,  
i.e., to the direct sum of Lie algebras $\frg = \frh \oplus \R$ endowed with a closed G$_2$-structure inducing the product metric.  

\smallskip

We now investigate the properties of the SU(3)-structure $(\omega,\psip)$ on $\frh$. Since $u\,\omega=\tomega$, we immediately see that $d\omega=0$. 
Consequently, we have
\begin{equation}\label{dpsiCE}
\begin{split}
d\psip	&= \Wp\W\omega + \Wu\W\psip,\\
d\psim	&= \Wm\W\omega + J\Wu\W\psip,
\end{split}
\end{equation}
for some unique $\Wu\in\frh^*$ and $w_2^{\pm}\in\Lambda^2_8\frh^*$ (cf.~Sect.~\ref{SU3G2LieAlg}). 

\begin{lemma}\label{lemdeta}
	The $2$-form $d\eta\in\Lambda^2\frh^*$ has no component in $\Lambda^2_1\frh^*=\R\omega,$ that is, $d\eta\W \omega^2=0$. 
	Moreover, the intrinsic torsion forms $\Wm$ and $\Wu$ are related to the components $(d\eta)_k$ of $d\eta$ in $\Lambda^2_k\frh^*,$ $k=6,8$, as follows 
	\[
	\begin{split}
		u\,(d\eta)_6 & = -*_g(\Wu \W \psip),\\
		u\,(d\eta)_8 & = -\Wp.
	\end{split}
	\]
	In particular, $w_1 = \frac{u}{2} *_g(\psip\W d\eta).$
\end{lemma}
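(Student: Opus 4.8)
The plan is to extract all three assertions from the single equation obtained by expanding the closedness condition $d\f=0$ in terms of the SU(3)-data on $\frh$. Since $\f=u\,\omega\W\eta+\psip$ with $u$ constant and $d\omega=0$, while $d\psip=\Wp\W\omega+\Wu\W\psip$ by \eqref{dpsiCE}, differentiation yields the master relation
\[
u\,\omega\W d\eta+\Wp\W\omega+\Wu\W\psip=0 .
\]
The first step is to record that $d\eta\in\Lambda^2\frh^*$: since $z$ is central, $\iota_z d\eta=0$, so $d\eta$ is horizontal and admits a decomposition $d\eta=(d\eta)_1+(d\eta)_6+(d\eta)_8$ with $(d\eta)_1=c\,\omega\in\R\omega$, $(d\eta)_6\in\Lambda^2_6\frh^*$ and $(d\eta)_8\in\Lambda^2_8\frh^*$.

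Next I would project the master relation onto the three irreducible summands of $\Lambda^4\frh^*=*_g(\R\omega)\oplus *_g\Lambda^2_6\frh^*\oplus *_g\Lambda^2_8\frh^*$. Using the identities $\omega\W(d\eta)_6=*_g(d\eta)_6$, $\omega\W(d\eta)_8=-*_g(d\eta)_8$ and $\omega\W\omega=2*_g\omega$, together with $\Wp\W\omega=-*_g\Wp$ (because $\Wp\in\Lambda^2_8\frh^*$) and the fact that $\Wu\W\psip$ lies in $*_g\Lambda^2_6\frh^*$ (the image of the injective SU(3)-equivariant map $\frh^*\to\Lambda^4\frh^*$, $\alpha\mapsto\alpha\W\psip$, whose target is a six-dimensional submodule), the three components separate. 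The $*_g(\R\omega)$-part gives $c\,u\,\omega^2=0$, hence $c=0$ and $d\eta\W\omega^2=0$; the $*_g\Lambda^2_8\frh^*$-part gives $-u*_g(d\eta)_8-*_g\Wp=0$, i.e.\ $u(d\eta)_8=-\Wp$; and the $*_g\Lambda^2_6\frh^*$-part gives $u*_g(d\eta)_6+\Wu\W\psip=0$, which on applying $*_g$ (with $*_g*_g=1$ on $2$-forms) becomes $u(d\eta)_6=-*_g(\Wu\W\psip)$.

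For the final identity I would first reduce $\psip\W d\eta$ to its $\Lambda^2_6$-contribution: the $\Lambda^2_1$-part disappears because $\psip\W\omega=0$, and the $\Lambda^2_8$-part disappears because $\psip\W\beta=0$ for every $\beta\in\Lambda^2_8\frh^*$ (such $\beta$ is a primitive $(1,1)$-form, so wedging it with the $(3,0)+(0,3)$-form $\psip$ vanishes by type), whence $\psip\W d\eta=\psip\W(d\eta)_6$. Substituting $(d\eta)_6=-\tfrac1u *_g(\Wu\W\psip)$ and using item \eqref{idiv} of Lemma \ref{su3identities}, namely $*_g(\Wu\W\psip)\W\psip=2*_g\Wu$, gives $\psip\W d\eta=-\tfrac2u *_g\Wu$; applying $*_g$ once more, now with $*_g*_g=-1$ on $1$-forms, produces $\Wu=\frac{u}{2}*_g(\psip\W d\eta)$, which is precisely the asserted formula. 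The only genuinely delicate points, and the places where I expect to have to be most careful, are keeping the Hodge-star signs consistent across degrees — in dimension six $*_g*_g=+1$ on $2$- and $4$-forms but $-1$ on $1$-forms — and justifying the vanishing $\psip\W\beta=0$ on $\Lambda^2_8\frh^*$ by the type argument.
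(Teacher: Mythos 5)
Your proof is correct and takes essentially the same approach as the paper: both compare the closedness relation $u\,\omega\W d\eta + d\psip = 0$ with \eqref{dpsiCE}, split the resulting equation into SU(3)-irreducible components of $\Lambda^4\frh^*$ to obtain the three identities, and then derive the formula for $\Wu$ from identity \ref{idiv}) of Lemma \ref{su3identities} together with the sign $*_g*_g=-1$ on $1$-forms. The only cosmetic differences are that the paper gets $d\eta\W\omega^2=0$ by wedging the closedness relation with $\omega$ (using $\omega\W\psip=0$ and $d\omega=0$) instead of projecting onto $\R\,\omega^2$, and it leaves implicit the facts ($\alpha\W\psip$ spans the $6$-dimensional summand, $(d\eta)_8\W\psip=0$) for which you supply Schur-lemma and type arguments.
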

\begin{proof}
	The condition $d\f=0$ is equivalent to $d\psip = -u\,\omega\W d\eta$. Since $\omega$ is symplectic and $\omega\W\psip=0$, we get $d\eta\W\omega^2=0$. 
	Now, according to the SU(3)-irreducible decomposition $\Lambda^2\frh^* = \Lambda^2_1\frh^*\oplus\Lambda^2_6\frh^*\oplus\Lambda^2_8\frh^*$, this implies that $(d\eta)_1=0$, and we thus have 
	$d\eta = (d\eta)_6+(d\eta)_8$, with $(d\eta)_6\W\omega = *_g(d\eta)_6$ and $(d\eta)_8\W\omega = -*_g(d\eta)_8$, see Section \ref{SU3G2LieAlg}. Therefore, 
	\[
	d\psip = -u\,\omega\W d\eta = -u\,*_g(d\eta)_6 + u *_g(d\eta)_8. 
	\]
	Comparing this expression with the one in \eqref{dpsiCE}, we obtain the identities relating $(d\eta)_6$ and $(d\eta)_8$ with $w_1$ and $\Wp$, respectively.  	
	Finally, to obtain the expression of $w_1$, it is sufficient to notice that
	\[
	u\,d\eta\W\psip = u\,(d\eta)_6\W \psip = -*_g(\Wu \W \psip)\W\psip = -2*_g w_1, 
	\]
	where the last identity follows from Lemma \ref{su3identities}.
\end{proof}

Since $d\eta\in\Lambda^2_6\frh^*\oplus\Lambda^2_8\frh^*$, it satisfies the following condition (see \cite[Rem.~2.7]{BeVe}): 
\begin{equation}\label{detaomega}
d\eta\W\omega = -J*_gd\eta. 
\end{equation}

In the next lemma we describe the intrinsic torsion form $\tau$ of the closed G$_2$-structure $\f= u\,\omega\W\eta+\psip$ on $\frg$ 
in terms of the torsion forms of the SU(3)-structure $(\omega,\psip)$ on $\frh$. 

\begin{lemma}\label{tauLemma}
The intrinsic torsion form $\tau\in\Lambda^2_{14}\frg^*$ of the closed $\G_2$-structure $\f= u\,\omega\W\eta+\psip$ is given by 
\[
\tau = \Wm - *_g\left(Jw_1\W\psip \right) -2u\,Jw_1\W\eta, 
\]
and its Hodge dual is 
\[
*_\f\tau = u*_g\Wm\W\eta - u\,J\Wu\W\psip\W\eta+2*_gJ\Wu. 
\]
Consequently, $|\tau|_\f^2  = |\Wm|_g^2+6|\Wu|_g^2$. 
\end{lemma}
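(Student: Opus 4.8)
The plan is to compute $d*_\f\f$ in closed form, read off $\tau$ from the equation $d*_\f\f=\tau\W\f$, obtain $*_\f\tau$ from the fact that $\tau\in\Lambda^2_{14}\frg^*$, and finally evaluate $|\tau|_\f^2$ by an orthogonal decomposition. Throughout I would use that, since $z$ is central and $\iota_z$ kills forms pulled back from $\frh$, the Chevalley--Eilenberg differentials of $\frg$ and $\frh$ agree on $\Lambda^\bullet\frh^*$; hence the structure equations \eqref{dpsiCE} and Lemma \ref{lemdeta} apply verbatim when differentiating on $\frg$.

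First I would differentiate $*_\f\f=\frac12\omega^2+u\,\psim\W\eta$. Since $d\omega=0$ and $u$ is constant, this gives $d*_\f\f=u\,d\psim\W\eta-u\,\psim\W d\eta$. Substituting $d\psim=\Wm\W\omega+J\Wu\W\psip$ handles the first term. For the second, I would use $d\eta=(d\eta)_6+(d\eta)_8$ from Lemma \ref{lemdeta}: the $\Lambda^2_8\frh^*$-component is annihilated by $\psim$ (being primitive $(1,1)$-forms, elements of $\Lambda^2_8\frh^*$ wedge to zero with $\psip$ and $\psim$), while $u(d\eta)_6=-*_g(\Wu\W\psip)$ combined with the third identity of Lemma \ref{su3identities} turns $\psim\W(d\eta)_6$ into $\frac2u*_g(J\Wu)$. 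Collecting terms should yield
\[
d*_\f\f=u\,\Wm\W\omega\W\eta+u\,J\Wu\W\psip\W\eta-2*_g(J\Wu).
\]

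Next I would check that this equals $\tau\W\f$ for the stated $\tau$, by expanding $\tau\W\f$ against $\f=u\,\omega\W\eta+\psip$. The term $\Wm\W\psip$ and every term containing $\eta\W\eta$ vanish, $*_g(J\Wu\W\psip)\W\psip=2*_g(J\Wu)$ by the fourth identity of Lemma \ref{su3identities}, and the only delicate point is matching the two $\eta$-terms. This reduces to the identity $J\Wu\W\psip=*_g(J\Wu\W\psip)\W\omega$, which holds because $*_g(J\Wu\W\psip)\in\Lambda^2_6\frh^*$ satisfies $\sigma\W\omega=*_g\sigma$, together with $*_g*_g=\mathrm{id}$ on $\Lambda^4\frh^*$. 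Since wedging with $\f$ is injective on $\Lambda^2\frg^*$ and the true torsion form lies in $\Lambda^2_{14}\frg^*$ and solves the same equation, the computed $\tau$ must coincide with it; in particular $\tau\in\Lambda^2_{14}\frg^*$. The formula for $*_\f\tau$ then follows immediately from $*_\f\tau=-\tau\W\f=-d*_\f\f$, rewriting $-\Wm\W\omega=*_g\Wm$ (valid for $\Wm\in\Lambda^2_8\frh^*$).

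Finally, I would compute the norm by splitting $\tau$ into its part $\Wm-*_g(J\Wu\W\psip)\in\Lambda^2\frh^*$ and its mixed part $-2u\,J\Wu\W\eta$. Since $g_\f=g+u^2\,\eta\otimes\eta$ and $|\eta|_\f^2=u^{-2}$, these two pieces are $g_\f$-orthogonal, and within the first piece $\Wm\in\Lambda^2_8\frh^*$ is orthogonal to $*_g(J\Wu\W\psip)\in\Lambda^2_6\frh^*$; using $|*_g(J\Wu\W\psip)|_g^2=|J\Wu\W\psip|_g^2=2|\Wu|_g^2$ (obtained by wedging the fourth identity of Lemma \ref{su3identities} with $J\Wu$) and $|{-2u\,J\Wu\W\eta}|_\f^2=4u^2|\Wu|_g^2|\eta|_\f^2=4|\Wu|_g^2$, I would get $|\tau|_\f^2=|\Wm|_g^2+2|\Wu|_g^2+4|\Wu|_g^2=|\Wm|_g^2+6|\Wu|_g^2$. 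The hard part will be the first computation: assembling $d*_\f\f$ requires disciplined sign-tracking across the wedge products with $\eta$, $\psim$ and $\omega$ and the correct pairing of the torsion relations of Lemma \ref{lemdeta} with the SU(3) identities, the one genuinely non-obvious simplification being $J\Wu\W\psip=*_g(J\Wu\W\psip)\W\omega$, which is exactly what forces the $\eta$-components of $d*_\f\f$ and $\tau\W\f$ to agree.
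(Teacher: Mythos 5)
Your proof is correct, and after the opening computation it genuinely diverges from the paper's argument. Both proofs begin the same way: computing $d*_\f\f = u\,\Wm\W\omega\W\eta + u\,J\Wu\W\psip\W\eta - 2*_g(J\Wu)$ from \eqref{dpsiCE}, Lemma \ref{lemdeta} and Lemma \ref{su3identities}, including the observation $(d\eta)_8\W\psim=0$. The paper then proceeds by \emph{derivation}: it applies $-*_\f$ to $d*_\f\f$, using the relations between $*_\f$ and $*_g$ coming from $g_\f = g+u^2\eta\otimes\eta$, to produce $\tau$, computes $*_\f\tau$ by the same device, and gets the norm from $\tau\W*_\f\tau=|\tau|_\f^2\vol_{g_\f}$ together with identity \ref{idiv}) of Lemma \ref{su3identities}. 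You proceed by \emph{verification plus uniqueness}: you check that the proposed $2$-form satisfies $\tau\W\f=d*_\f\f$, and since the torsion form of a closed $\G_2$-structure exists in $\Lambda^2_{14}\frg^*$ (Sect.~\ref{SU3G2LieAlg}) and wedging with $\f$ is injective on $2$-forms (the operator $\alpha\mapsto *_\f(\alpha\W\f)$ has eigenvalues $2$ and $-1$ on the two irreducible summands), your $\tau$ must coincide with it; this hands you $*_\f\tau=-\tau\W\f=-d*_\f\f$ with no extra computation. Your pivotal identity $*_g(J\Wu\W\psip)\W\omega = J\Wu\W\psip$ is correct: it is identity \ref{idi}) of Lemma \ref{su3identities} read through $J\Wu\W\psip=\Wu\W\psim$, and your sign bookkeeping in expanding $\tau\W\f$ (in particular $-2u\,J\Wu\W\eta\W\psip=+2u\,J\Wu\W\psip\W\eta$) checks out. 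Your norm computation by $g_\f$-orthogonal splitting ($\Lambda^2\frh^*\perp\frh^*\W\eta$, $\Lambda^2_8\frh^*\perp\Lambda^2_6\frh^*$, $|\eta|_\f^2=u^{-2}$, $|*_g(J\Wu\W\psip)|_g^2=2|\Wu|_g^2$) likewise replaces the paper's wedge identity and yields the same answer. What your route buys is that it entirely avoids the sign-sensitive mixed Hodge-star relations on the splitting $\frg=U\oplus\R z$, which are the most error-prone part of the paper's proof; what it costs is that the formula for $\tau$ must be guessed in advance rather than derived, and the argument leans on the existence of the torsion form rather than reconstructing it.
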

\begin{proof}
Recall that $\tau = - *_\f d *_\f \f$, where $*_\f\f = \frac12\, \omega^2 +u\,\psim\W\eta$. We first compute
\[
\begin{split}
d*_\f\f	&= u\,d\psim\W\eta - u\,\psim\W d\eta = u\,d\psim\W\eta - u\,\psim\W (d\eta)_6\\	
		&= u\,d\psim\W\eta -2\,*_gJw_1,
\end{split}
\]
where we used $(d\eta)_8\W\psim=0$, Lemma \ref{lemdeta} and the identity $*_g(w_1\W\psip)\W\psim = -2*_gJw_1$. 

Using now the relation between the Hodge operators $*_\f$ and $*_g$ together with the identity $d\psim = -*_g\Wm +Jw_1\W\psip$, we obtain
\[
\tau = -*_\f d *_\f\f	 = -*_gd\psim -2\,u\,Jw_1\W\eta =  \Wm - *_g\left(Jw_1\W\psip \right) -2u\,Jw_1\W\eta. 
\]
From this expression and the relation between $*_\f$ and $*_g$, one can easily compute $*_\f\tau$. To obtain the norm of $\tau$, it is then sufficient to 
use the identity $\tau\W*_\f\tau = |\tau|_\f^2\vol_{g_\f}$ and the identity \ref{idiv}) of Lemma \ref{su3identities}. 
\end{proof}

\medskip

We now examine an example of closed G$_2$-structure on the nilpotent Lie algebra $\frn_9$ in the light of the Propositions \ref{ClosedContactization} and \ref{invClosedContactization}. 

\begin{example}\label{exCT}
Consider the six-dimensional nilpotent Lie algebra $\mathfrak{h}$ with structure equations
\[
(0,0,  e^{12}, e^{13}, e^{23}, e^{15} + e^{24}). 
\]
The following 2-forms are symplectic forms on $\mathfrak{h}$:
\[
\omega_{\sst0} =  e^{16}  + e^{25} + e^{34}, \quad \tomega = - e^{12}- e^{14}  - e^{35} + e^{26}. 
\]
Let $\mathfrak{g} = \mathfrak{h} \oplus \mathbb{R} e_7$ be the contactization of $(\mathfrak{h},\omega_{\sst0})$ with contact form $\theta=e^7$ and Reeb vector $z = e_7$. 
Then, $\mathfrak{g}$ has structure equations
\[
(0,0,  e^{12}, e^{13}, e^{23}, e^{15} + e^{24}, e^{16}  + e^{25} + e^{34}),
\]
and so it coincides with the Lie algebra $\frn_9$ described in the proof of Corollary \ref{CorNil}. 
It is easy to check that $\tomega\wedge\omega_{\sst0} = - d\rho$, where 
\[
\rho = e^{124} - e^{125} - e^{136} - e^{234} - e^{345}  + e^{456}
\]
is a definite 3-form on $\mathfrak{h}$. 
The almost complex structure $J$ induced by $(\rho,e^{123456})$ on $\mathfrak{h}$ is given by 
\[
J e_1 = -e_4-e_5,~ J e_2 = e_6,~ J e_3 = e_2 - e_5,~ J e_4 = e_1-e_3-e_6,~ J e_5 = e_3 +e_6,~ Je_6 = -e_2.  
\]
The 2-form $\tomega$ is a taming form for $J,$ as for any non-zero vector $\xi = \xi^ke_k\in\mathfrak{g}$ we have 
\[
\tomega(\xi,J\xi) = \sum_{k=1}^6 (\xi^k)^2 + \xi^1\xi^6 + \xi^2\xi^5 - \xi^3\xi^6 - \xi^4\xi^5 > 0. 
\] 
Therefore, $\varphi = \tomega \wedge e^7 + \rho$ defines a closed G$_2$-structure on $\mathfrak{g}$. 
The metric induced by $\f$ has the following expression
\[
g_\f = 2^{\frac13}\left[ \sum_{i=1}^7 e^i\odot e^i + (e^1\odot e^6 + e^2\odot e^5 - e^3\odot e^6 - e^4 \odot e^5) + (e^2 -e^4+e^5)\odot e^7 \right],
\]
where $e^i\odot e^k = \frac12\left(e^i\otimes e^k + e^k \otimes e^i\right)$.  

\smallskip

On the other hand, we can start with the Lie algebra $\frg$ endowed with the closed G$_2$-structure $\f$ and consider the SU(3)-structure induced by it on the $g_\f$-orthogonal 
complement $\fru$ of the one-dimensional subspace generated by $e_7\in\frz(\frg)$. 
We have $u=|e_7|_\f = 2^{\frac16}$ and 
\[
\eta =  u^{-2}\,(e_7)^\flat = \frac12\left(e^2-e^4+e^5\right) + e^7. 
\]
The closed G$_2$-structure $\f$ can be written as $\f=u\,{\omega}\W\eta+\psip$, where the pair
\[
\begin{split}
\omega	& = u^{-1}\,\tomega  = 2^{-\frac16}\left( - e^{12}- e^{14}  - e^{35} + e^{26}\right),\\
\psip		&= -\frac12\,e^{125}-e^{136}+\frac12\,e^{145}-e^{234} -\frac12\,e^{246}+\frac12\,e^{256}-\frac12\,e^{345}+\frac12\,e^{235}+e^{456},
\end{split}
\]
defines an SU(3)-structure on the vector subspace $\fru\subset \frg$.  
Notice also that $\frh \coloneqq \ker \theta$ is a Lie algebra with respect to the bracket \eqref{bracketh}, and that it is endowed with an SU(3)-structure $(\omega,\psip)$ whose expression with respect to  
the basis $\{e^1,\ldots,e^6\}$ of $\frh^*$ is the same as the one appearing above (cf.~Remark \ref{remcontsu3}). 
The metric induced by $({\omega},\psip)$ on $\frh$ is 
\[
\begin{split}
g	&= {2}^{\frac13}\left(e^1\odot e^1 + e^3\odot e^3 + e^6\odot e^6\right) + \frac{3}{4}\, {2}^{\frac13} \left(e^2\odot e^2 + e^4\odot e^4 + e^5\odot e^5\right) \\
	&\quad +{2}^{\frac13} \left[e^1\odot e^6 - e^3\odot e^6 +\frac12\left(e^2\odot e^4 + e^2\odot e^5 - e^4\odot e^5\right)\right], 
\end{split}
\]
and we have $g_\f = g +u^{2}\,\eta\otimes\eta$. 
\end{example}

The results of Proposition \ref{ClosedContactization} are also useful to produce examples of seven-dimensional solvable non-nilpotent Lie algebras admitting closed G$_2$-structures, as the next example shows.

\begin{example}\label{ExS9}
On the six-dimensional unimodular solvable non-nilpotent Lie algebra $\mathfrak{g}_{6,70}^{0,0}$ with structure equations
\[
\left(-e^{26}+e^{35},e^{16}+e^{45},-e^{46},e^{36},0,0\right), 
\]
consider the closed 2-forms
\[
\omega_{\sst0} =2e^{34}, \quad \tomega = - e^{13}- e^{24}  - e^{56},
\]
and the definite 3-form
\[
\rho = e^{125} -e^{146}+e^{236}- e^{345}.
\]
Then, $d\rho=-{\tomega}\wedge\omegaz$, and the almost complex structure $J$ induced by the pair $(\rho,e^{123456})$ is given by
\[
J e_1 = -e_3,~ J e_2 = -e_4,~ J e_3 = e_1,~ J e_4 = e_2,~ J e_5 = -e_6,~ J e_6 = e_5.
\]
In particular, $\tomega$ is a taming form for $J$, as for any non-zero vector $\xi = \xi^ke_k\in\mathfrak{g}_{6,70}^{0,0}$ we have 
\[
\tomega(\xi,J\xi) = \sum_{k=1}^6 (\xi^k)^2> 0.  
\] 
The pair $( \tomega,\rho)$ defines an $\text{SU}(3)$-structure on $\mathfrak{g}_{6,70}^{0,0}$, 
since $ \tomega\wedge \rho=0$ and $3\rho\wedge J_{\rho}\rho = 2 \tomega^3$.

The central extension of $(\mathfrak{g}_{6,70}^{0,0},\omegaz)$ is given by 
\[
\frg = (-e^{26}+e^{35},e^{16}+e^{45},-e^{46},e^{36},0,0, 2e^{34}),
\] 
and it is isomorphic to the Lie algebra $\mathfrak{s}_9$ of Theorem \ref{classification} below. 
By Proposition \ref{ClosedContactization}, we know that the 3-form $\f = \tomega\W e^7+\rho$ defines a closed $\G_2$-structure on $\frg$. 
Notice that the 1-form $e^7$ does not define a contact structure on $\frg$, as the closed 2-form $\omegaz$ is degenerate.
\end{example}

More generally, one can consider the list of all six-dimensional unimodular solvable non-nilpotent Lie algebras admitting symplectic structures \cite{Mac} 
and determine which of them admit a structure $(\omegaz,\tomega,\rho)$ satisfying the hypothesis of Proposition \ref{ClosedContactization}. 
This allows one to obtain further examples of solvable non-nilpotent Lie algebras admitting closed G$_2$-structures.


\section{A classification result}\label{ClassSect}

In this section, we classify all seven-dimensional unimodular Lie algebras with non-trivial center admitting closed G$_2$-structures, up to isomorphism. 
Every such Lie algebra must be solvable by the results of \cite{FiRa1}. If it is nilpotent, then it is isomorphic to one of the Lie algebras $\frn_1,\ldots,\frn_{12}$ by \cite{CoFe}. 
To complete the classification, we have to determine which unimodular solvable non-nilpotent Lie algebras with non-trivial center admit closed G$_2$-structures.  
This is done in the following. 

\begin{theorem}\label{classification}
Let $\mathfrak{g}$ be a seven-dimensional unimodular solvable non-nilpotent Lie algebra with non-trivial center. 
Then, $\mathfrak{g}$ admits closed $\G_2$-structures if and only if it is isomorphic to one of the following 
\[
\begin{split}
\frs_1	&=	(e^{23},-e^{36},e^{26},e^{26}-e^{56},e^{36}+e^{46},0,0), \\
\frs_2	&=	(e^{16}+e^{35},-e^{26}+e^{45},e^{36},-e^{46},0,0,0), \\
\frs_3	&=	(-e^{16}+e^{25},-e^{15}-e^{26},e^{36}-e^{45},e^{35}+e^{46},0,0,0), \\
\frs_4	&=	(0,-e^{13},-e^{12},0,-e^{46},-e^{45},0), \\
\frs_5	&=	(e^{15},-e^{25},-e^{35},e^{45},0,0,0), \\
\frs_6	&=	(\alpha e^{15}+e^{25},-e^{15}+\alpha e^{25},-\alpha e^{35}+e^{45},-e^{35}-\alpha e^{45},0,0,0),~ \alpha> 0, \\
\frs_7	&=	(e^{25},-e^{15},e^{45},-e^{35},0,0,0), \\
\frs_8	&=	(e^{16}+e^{35},-e^{26}+e^{45},e^{36},-e^{46},0,0,e^{34}), \\
\frs_9	&=	(-e^{26}+e^{35},e^{16}+e^{45},-e^{46},e^{36},0,0, e^{34}), \\
\frs_{10}	&=	\left(e^{23},-e^{36},e^{26},e^{26}-e^{56},e^{36}+e^{46},0,2\,e^{16}+e^{25}-e^{34}+\sqrt{3}\,e^{24} +\sqrt{3}\,e^{35}\right),\\
\frs_{11}	&=	\left(e^{23},-e^{36},e^{26},e^{26}-e^{56},e^{36}+e^{46},0,2\,e^{16}+e^{25}-e^{34}-\sqrt{3}\,e^{24}-\sqrt{3}\,e^{35}\right).
\end{split}
\]
In particular, $\frg$ is the contactization of a symplectic Lie algebra if and only if it is isomorphic either to $\frs_{10}$ or to $\frs_{11}$. 
\end{theorem}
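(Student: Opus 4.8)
The plan is to use the two converse results, Propositions \ref{ClosedContactization} and \ref{invClosedContactization}, to reduce the seven-dimensional classification to a finite search over six-dimensional bases. First I would observe that, by Proposition \ref{invClosedContactization}, any admissible $\frg$ is a central extension $\frg = \frh \oplus \R z$ of a six-dimensional Lie algebra $(\frh,\omegaz)$, with $\f = \tomega\W\theta+\rho$. Writing $\ad_x$ in block form relative to the splitting $\frh\oplus\R z$ gives $\tr\ad_x = \tr(\ad_x|_\frh)$ and $\ad_z=0$, so $\frg$ is unimodular precisely when $\frh$ is; moreover $\frg$ is solvable iff $\frh$ is, and $\frg$ is non-nilpotent iff $\frh$ is, since a central extension of a nilpotent algebra is nilpotent (and $\frh=\frg/\R z$ is a quotient). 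As $\tomega$ is a symplectic form on $\frh$, the base is a six-dimensional unimodular solvable non-nilpotent Lie algebra admitting symplectic structures, and these are classified in \cite{Mac}, yielding a finite list of candidate bases.

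Next, for each candidate base $\frh$ I would parametrise the admissible extensions. Since the central extension depends only on $[\omegaz]\in H^2(\frh)$ and we classify $\frg$ up to isomorphism, it suffices to run over representatives of $H^2(\frh)$ modulo the action of $\mathrm{Aut}(\frh)$. For each class $[\omegaz]$, Propositions \ref{ClosedContactization} and \ref{invClosedContactization} reduce the existence of a closed $\Gd$-structure on the corresponding $\frg$ to a purely six-dimensional question: do there exist a definite $\rho\in\Lambda^3\frh^*$ and a symplectic form $\tomega$ taming $J_\rho$ with $d\rho = -\tomega\W\omegaz$? I would take generic $\rho$ and $\tomega$ with indeterminate coefficients, impose the \emph{linear} closedness equation $d\rho=-\tomega\W\omegaz$, and then test the two open conditions, definiteness $P(\rho)<0$ and taming positivity $\tomega(v,J_\rho v)>0$, which become polynomial inequalities in the remaining coefficients.

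The bulk of the argument is the ensuing case analysis. For those bases or cohomology classes where the linear system together with the open conditions has no solution, the extension carries no closed $\Gd$-structure and is discarded; in the surviving cases I would exhibit an explicit compatible triple $(\omegaz,\tomega,\rho)$, exactly as in Examples \ref{exCT} and \ref{ExS9}. Collecting all surviving seven-dimensional algebras and reducing them up to isomorphism — separating classes by invariants such as $\dim\frz(\frg)$, the nilradical, the (co)homology, and the eigenvalue data of $\ad$, and noting that $\frs_6$ is a genuine one-parameter family — should recover exactly $\frs_1,\dots,\frs_{11}$. \textbf{The main obstacle, where essentially all the work lies}, is precisely this exhaustive bookkeeping: deciding for every base and every class whether the nonlinear definiteness and taming constraints are simultaneously compatible with the linear closedness equation, and then carrying out the isomorphism classification carefully enough to avoid both duplicates and omissions.

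For the final assertion I would invoke the Proposition of Section \ref{SectCont}: since every algebra in the list has non-trivial center, $\frg$ is the contactization of a symplectic Lie algebra exactly when it admits a contact structure. A left-invariant contact form has its Reeb vector spanning the center (cf.~\cite{AFV}), which forces $\dim\frz(\frg)=1$ and reduces the question to whether the dual $\theta$ of a central generator, possibly after adding an exact correction, satisfies $(d\theta)^3\W\theta\neq0$, equivalently whether $[\omegaz]=[d\theta|_\frh]$ admits a symplectic representative on $\frh$. Computing $\frz(\frg)$ and testing this non-degeneracy for each $\frs_i$ — the algebras $\frs_1,\dots,\frs_7$ have closed central duals and centers of dimension at least two, while for $\frs_8,\frs_9$ the relevant class is degenerate — I expect the contact condition to hold precisely for $\frs_{10}$ and $\frs_{11}$ (whose defining $2$-form $de^7$ is non-degenerate), giving the stated dichotomy.
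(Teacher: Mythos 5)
Your reduction and case analysis for the main classification is essentially the paper's own proof: Proposition \ref{invClosedContactization} reduces the problem to central extensions of the six-dimensional unimodular solvable non-nilpotent symplectic Lie algebras classified in \cite{Mac}, and for each base one scans $H^2(\frh)$ for classes $[\omegaz]$ admitting a closed non-degenerate $\tomega$ with $\tomega\W\omegaz$ exact (your ``solve the linear equation $d\rho=-\tomega\W\omegaz$'' is exactly this exactness condition), exhibiting explicit triples $(\omegaz,\tomega,\rho)$ in the surviving cases. The one notable difference is that the paper does not brute-force the case $\omegaz=0$ (i.e.\ $\frg\cong\frh\oplus\R$): it quotes \cite[Thm.~C]{FinoSal}, which says $\frh\oplus\R$ admits closed $\G_2$-structures if and only if $\frh$ admits symplectic half-flat $\SU(3)$-structures, and then the classification of \cite[Thm.~1.1]{FerMan}, which immediately yields $\frs_1,\ldots,\frs_7$. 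Your route is viable in principle but computationally much heavier there.

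There is, however, a genuine error in your proof of the final assertion: it is false that $\frs_1,\ldots,\frs_7$ all have centers of dimension at least two. The algebras $\frs_2$, $\frs_3$, $\frs_4$ have one-dimensional center, spanned by $e_7$ (their six-dimensional ideals $\frg_{6,54}^{0,-1}$, $\frg_{6,118}^{0,-1,-1}$ and $\fre(1,1)\oplus\fre(1,1)$ are centerless); the paper itself lists $\frs_2,\frs_3,\frs_4$ among the algebras with one-dimensional center in Sect.~\ref{SolitonsSect}. So your dimension test does not exclude them from being contactizations, and as written your argument either leaves them unexamined or settles them for the wrong reason. They are indeed not contactizations, but this requires an argument: for these three the extension class is $[\omegaz]=0$, so a contact form would force an exact symplectic form on the six-dimensional unimodular ideal $\frh$, which is impossible since on a unimodular Lie algebra every $5$-form is closed, hence every exact top-degree form vanishes, and $(d\alpha)^3=d\bigl(\alpha\W(d\alpha)^2\bigr)$. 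Note that the same unimodularity fact is what your exclusion of $\frs_8,\frs_9$ silently relies on: degeneracy of the chosen representative $\omegaz$ does not by itself imply that every representative of $[\omegaz]$ is degenerate; one must observe that $(\omegaz+d\alpha)^3-\omegaz^3$ is exact, hence zero. The paper handles $\frs_8,\frs_9$ exactly this way, and disposes of $\frs_1,\ldots,\frs_7$ by a direct verification that they admit no contact structure at all.
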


\begin{proof}
Since the central extension of a nilpotent Lie algebra is nilpotent, by Proposition \ref{invClosedContactization} we can assume that $\frg$ is the central extension of a six-dimensional unimodular solvable non-nilpotent 
Lie algebra $(\frh,\omegaz)$ admitting symplectic structures. 
Recall that $\frg$ is determined by any representative in the cohomology class $[\omegaz]\in H^2(\mathfrak{h})$, up to isomorphism. 
Moreover, $\frh$ is isomorphic to one of the Lie algebras listed in Table \ref{table1}  (cf.~\cite{FerMan, Mac}).

If $\omegaz=0$, then $\frg$ is the direct sum of $\frh$ and the abelian Lie algebra $\R$.  
As a consequence of \cite[Thm.~C]{FinoSal}, $\frg$ admits closed $\G_2$-structures if and only if $\frh$ admits symplectic half-flat SU(3)-structures.   
Therefore, by \cite[Thm.~1.1]{FerMan}, $\frg$ must be isomorphic to one of the Lie algebras 
$\frs_1\cong\frg^0_{6,38}\oplus\R$, $\frs_2\cong \frg^{0,-1}_{6,54}\oplus\R$, $\frs_3\cong \frg^{0,-1,-1}_{6,118}\oplus\R$, $\frs_4\cong \fre(1,1)\oplus\fre(1,1)\oplus\R$, 
$\frs_5\cong A^{-1,-1,1}_{5,7}\oplus\R^2$, $\frs_6\cong A^{\alpha,-\alpha,1}_{5,17}\oplus\R^2$, $\frs_7\cong A^{0,0,1}_{5,17}\oplus\R^2$.  

We can then focus on the case when $\omegaz\neq0$ and $\frh$ is one of the Lie algebras appearing in Table \ref{table1}. 
To determine those having a central extension admitting closed $\G_2$-structure, we proceed as follows. 
First, we compute a basis of the second cohomology group $H^2(\frh)$ using the structure equations given in Table \ref{table1}. 
Then, we consider a non-zero representative $\omegaz$ of the generic element in $H^2(\frh)$, 
and we look for closed non-degenerate 2-forms $\tomega\in\Lambda^2\frh^*$ such that $\tomega\W\omegaz$ is exact (cf.~Proposition \ref{invClosedContactization}). 
A computation (with the aid of a computer algebra system, e.g.~Maple 2021, when needed) shows that 
there are no exact 4-forms of this type when $\frh$ is a decomposable Lie algebra not isomorphic to $A_{5,15}^{-1}\oplus\R$ or to $A_{5,18}^0\oplus\R$. 
Let us prove this claim, for instance, for the first decomposable Lie algebra appearing in Table \ref{table1}, namely $\frg_{6,13}^{-1,\frac{1}{2},0}$. 
A basis for its second cohomology group is given by
\[
\left( [e^{13}], [e^{24}], [e^{56}] \right), 
\] 
and we can consider the non-zero representative 
\[
\omegaz=f_1\,e^{13}+f_2\,e^{24}+f_3\, e^{56},
\] 
where $f_1,f_2,f_3\in \R$ and $f_1^2+f_2^2+f_3^2\neq0$.
The generic closed non-degenerate 2-form $\tomega$ on $\frg_{6,13}^{-1,\frac{1}{2},0}$ has the following expression
\[
\tomega=h_1\, e^{13}+h_2\left(e^{23}-\frac{1}{2}\, e^{16}\right)+h_3\, e^{24}+h_4\, e^{26}+h_5\, e^{36}+h_6\,e^{46}+h_7\, e^{56},
\] 
for some $h_i\in \R$ such that $h_1\,h_3\,h_7\neq 0$.  
Now, we compute  
\[
\begin{split}
\tomega \wedge \omegaz =&	-\left(f_1 h_3 + f_2 h_1\right) e^{1234} -f_1 h_4\, e^{1236} +f_1 h_6\, e^{1346} + \left(f_1 h_7 + f_3 h_1\right) e^{1356} \\
					&	-\frac12 f_2h_2\, e^{1246} -f_2h_5\,e^{2346} + \left(f_2h_7+f_3h_3\right) e^{2456}  + f_3h_2\,e^{2356}, 
\end{split} 
\]
and we see that this 4-form is exact only if the coefficients of $e^{1234}, e^{1356}, e^{2456}$ vanish, namely  
\[
\begin{cases}
	f_1\,h_3+f_2\,h_1=0, \\
	f_1\,h_7+f_3\,h_1=0, \\
	f_2\,h_7+f_3\,h_3=0.
\end{cases}
\]
This is a homogeneous linear system in the variables $f_i$'s whose unique solution under the constraint $h_1\,h_3\,h_7\neq 0$ is $f_1=f_2=f_3=0$. 
Thus, $\tomega\W\omegaz$ cannot be exact if $\omegaz\neq0$. 
A similar discussion leads us to ruling out all of the decomposable Lie algebras listed in Table \ref{table1} but $A_{5,15}^{-1}\oplus\R$ and $A_{5,18}^0\oplus\R$. 
In the remaining two cases, $\frh$ is the direct sum of a five-dimensional ideal $\frk$ and $\R$. 
A computation shows that there exist pairs $(\tomega,\omegaz)$ satisfying the required conditions only when $\omegaz\in\Lambda^2\frk^*$.  
In detail, if $\frh\cong A_{5,15}^{-1}\oplus\R$, then the possible 2-forms are given by
\[  
\tomega = h_1\,(e^{14}-e^{23})+h_2\,e^{15}+h_3\,e^{24}+h_4\,e^{25}+h_5\,e^{35}+h_6\,e^{45} + h_7\,e^{56},\quad \omegaz = a\,e^{24}, 
\]
where $a,h_i\in\R$ and $a\,h_1\,h_7\neq0$. If $\frh\cong A_{5,18}^0\oplus\R$, then the possible 2-forms are given by
\[
\tomega = k_1\,(e^{13}+e^{24})+k_2\, e^{15}+k_3\, e^{25}+k_4\, e^{34}+k_5\, e^{35}+k_6\, e^{45}+k_7\, e^{56},\quad \omegaz = b\,e^{34},
\]
where $b,k_i\in\R$ and $b\,k_1\,k_7\neq0$. 
Since in both cases $\frh=\frk\oplus\R$ and $\omegaz\in\Lambda^2\frk^*$, all possible central extensions of $(\frh,\omegaz)$ split as the Lie algebra direct sum of a six-dimensional ideal and $\R$. 
If such an extension admits closed $\G_2$-structures, then it must be isomorphic to one of $\frs_1,\ldots, \frs_7$.   

We are then left with the indecomposable Lie algebras appearing in Table \ref{table1}.
Also in this case, with analogous computations as before, one can check that there are no pairs $(\tomega,\omegaz)$ satisfying the required conditions 
when $\frh$ is an indecomposable Lie algebra not isomorphic to one of $\frg_{6,38}^0$, $\frg_{6,54}^{0,-1}$, $\frg_{6,70}^{0,0}$. 
In the remaining three cases, we claim that $\frh$ has a central extension admitting closed $\G_2$-structures. 

If $\frh\cong\frg_{6,38}^0$, then there exist closed non degenerate 2-forms $\tomega$ such that $\tomega\W\omegaz$ is exact if and only if either 
$\omegaz = a\left( 2\,e^{16}+e^{25}-e^{34}+\sqrt{3}\,e^{24} +\sqrt{3}\,e^{35}\right)$, for some $a\neq0$, or $\omegaz = b\left(2\,e^{16}+e^{25}-e^{34}-\sqrt{3}\,e^{24} -\sqrt{3}\,e^{35}\right)$, for some $b\neq0$. 
These forms are not cohomologous, so they give rise to non-isomorphic central extensions of $\frh$.  
In the first case, the central extension of $(\frh,\omegaz)$ is isomorphic to $\frs_{10}$, and it admits closed $\G_2$-structures. An example is given by
\[
\f = e^{123}-4\,e^{145}+2\, e^{167}-\sqrt{3}\,e^{247}+e^{256}+e^{257}-e^{346}-e^{347}-\sqrt{3}\,e^{357}.
\]
In the second case, the central extension of $(\frh,\omegaz)$ is isomorphic to $\frs_{11}$ and it admits closed $\G_2$-structures. An example is given by
\[
\f = e^{123} -4\, e^{145}+2\, e^{167}+\sqrt{3}\, e^{247}-e^{256}+e^{257}+e^{346}-e^{347}+\sqrt{3}\,e^{357}. 
\]
Both $\frs_{10}$ and $\frs_{11}$ are contactizations, since the 2-form $\omegaz$ is non-degenerate.   

If $\frh\cong \frg_{6,54}^{0,-1}$, then $\omegaz = a\,e^{34}$, for some $a\neq0$. 
The central extension of $(\frh,\omegaz)$ is isomorphic to $\frs_{8}$, and it admits closed $\G_2$-structures. An example is given by
\[
\f = e^{147}+e^{237}+e^{567}+e^{125}-e^{136}+\frac{1}{2}(e^{146}-e^{236})+\frac{5}{4}e^{246}+e^{345}.
\]
If $\frh\cong \frg_{6,70}^{0,0}$, then $\omegaz = a\,e^{34}$, for some $a\neq0$. The central extension of $(\frh,\omegaz)$ is 
isomorphic to $\frs_{9}$, and it admits closed $\G_2$-structures. An example is given by 
\[
\f = e^{137} +e^{247} +2\, e^{567} -e^{125} +e^{146} -e^{236} +e^{345},
\]
see also Example \ref{ExS9}. 

To conclude the proof, we first observe that the Lie algebras $\frs_8$ and $\frs_9$ cannot be the contactization of a symplectic Lie algebra. 
Indeed, in both cases $\omegaz$ is a closed degenerate 2-form on the unimodular Lie algebra $\frh$, thus every representative of $[\omegaz]\in H^2(\frh)$ is a degenerate 2-form.  
Finally, a direct computation shows that the remaining Lie algebras $\frs_1,\ldots,\frs_7$ do not admit any contact structure. 
\end{proof}

\begin{table}[H]
\begin{center}
\addtolength{\leftskip} {-3cm}
\addtolength{\rightskip}{-3cm}
\caption{Isomorphism classes of six-dimensional unimodular solvable non-nilpotent Lie algebras admitting symplectic structures \cite{FerMan, Mac}.}
\label{table1}
\smallskip
\scalebox{0.85}{
\renewcommand\arraystretch{1.25}
\begin{tabular}{|c|c|c|}
\hline
$\mathfrak{g}$ & {\normalfont Structure equations} 						& decomposable 	\\ \hline
$\mathfrak{g}_{6,3}^{0,-1}$ &$(e^{26},e^{36},0,e^{46},-e^{56},0)$ 			&  				\\ \hline 
$\mathfrak{g}_{6,10}^{0,0}$ &$(e^{26},e^{36},0,e^{56},-e^{46},0)$ 			& \\ \hline
$\mathfrak{g}_{6,13}^{-1,\frac{1}{2},0}$ &$(-\frac{1}{2}e^{16}+e^{23},-e^{26},\frac{1}{2}e^{36},e^{46},0,0)$		& \checkmark\\ \hline
$\mathfrak{g}_{6,13}^{\frac{1}{2},-1,0}$ &$(-\frac{1}{2}e^{16}+e^{23},\frac{1}{2}e^{26},-e^{36},e^{46},0,0)$		& \checkmark\\ \hline
$\mathfrak{g}_{6,15}^{-1}$ &$(e^{23},e^{26},-e^{36},e^{26}+e^{46},e^{36}-e^{56},0)$						& \\ \hline
$\mathfrak{g}_{6,18}^{-1,-1}$  &$(e^{23},-e^{26},e^{36},e^{36}+e^{46},-e^{56},0)$						& \\ \hline
$\mathfrak{g}_{6,21}^0$ & $(e^{23},0,e^{26},e^{46},-e^{56},0)$ 									& \\ \hline
$\mathfrak{g}_{6,36}^{0,0}$  &$(e^{23},0,e^{26},-e^{56},e^{46},0)$									& \\ \hline
$\mathfrak{g}_{6,38}^0$  &$(e^{23},-e^{36},e^{26},e^{26}-e^{56},e^{36}+e^{46},0)$						& \\ \hline
$\mathfrak{g}_{6,54}^{0,-1}$  &$(e^{16}+e^{35},-e^{26}+e^{45},e^{36},-e^{46},0,0)$						&\\ \hline
$\mathfrak{g}_{6,70}^{0,0}$  &$(-e^{26}+e^{35},e^{16}+e^{45},-e^{46},e^{36},0,0)$						& \\ \hline
$\mathfrak{g}_{6,78}$  &$(-e^{16}+e^{25},e^{45},e^{24}+e^{36}+e^{46},e^{46},-e^{56},0)$					& \\ \hline
$\mathfrak{g}_{6,118}^{0,-1,-1}$  &$(-e^{16}+e^{25},-e^{15}-e^{26},e^{36}-e^{45},e^{35}+e^{46},0,0)$		&\\ \hline
$\mathfrak{n}_{6,84}^{\pm 1}$  &$(-e^{45},-e^{15}-e^{36},-e^{14}+e^{26} \mp e^{56},e^{56},-e^{46},0)$		&\\ \hline
$\mathfrak{e}(2) \oplus \mathfrak{e}(2)$ &$(0,-e^{13},e^{12},0,-e^{46},e^{45})$  						& \checkmark\\ \hline 
$\mathfrak{e}(1,1) \oplus \mathfrak{e}(1,1)$ &$(0,-e^{13},-e^{12},0,-e^{46},-e^{45})$ 						& \checkmark\\ \hline 
$\mathfrak{e}(2) \oplus \mathbb{R}^3$ &$(0,-e^{13},e^{12},0,0,0)$  									& \checkmark\\ \hline 
$\mathfrak{e}(1,1) \oplus \mathbb{R}^3$ &$(0,-e^{13},-e^{12},0,0,0)$  								& \checkmark\\ \hline 
$\mathfrak{e}(2) \oplus \mathfrak{e}(1,1)$ &$(0,-e^{13},e^{12},0,-e^{46},-e^{45})$  						& \checkmark\\ \hline 
$\mathfrak{e}(2) \oplus \mathfrak{h}_3$ &$(0,-e^{13},e^{12},0,0,e^{45})$ 								& \checkmark\\ \hline 
$\mathfrak{e}(1,1) \oplus \mathfrak{h}_3$ &$(0,-e^{13},-e^{12},0,0,e^{45})$ 							& \checkmark\\ \hline 
$A_{5,7}^{-1,\beta,-\beta}\oplus \mathbb{R}$ & $(e^{15},-e^{25},\beta e^{35},-\beta e^{45},0,0), \quad 0<\beta<1$ 	& \checkmark\\ \hline
{$A_{5,7}^{-1,-1,1}\oplus \mathbb{R}$} & ${(e^{15},-e^{25},-e^{35},e^{45},0,0)}$ 					& \checkmark\\ \hline
$A_{5,8}^{-1} \oplus \mathbb{R}$  & $(e^{25},0,e^{35},-e^{45},0,0)$  										& \checkmark\\ \hline
$A_{5,13}^{-1,0,\gamma}\oplus\R$ & $(e^{15},-e^{25},\gamma e^{45},-\gamma e^{35},0,0),\quad \gamma>0$ 		& \checkmark\\ \hline
$A_{5,14}^0 \oplus \mathbb{R}$ & $(e^{25},0,e^{45},-e^{35},0,0)$ 										& \checkmark\\ \hline
$A_{5,15}^{-1} \oplus \mathbb{R}$  &$(e^{15}+e^{25},e^{25},-e^{35}+e^{45},-e^{45},0,0)$ 						& \checkmark\\ \hline
$A_{5,17}^{\alpha,-\alpha,1} \oplus \mathbb{R}$  &$(\alpha e^{15} + e^{25},-e^{15} + \alpha e^{25},-\alpha e^{35} + e^{45},- e^{35}-\alpha e^{45},0,0),\quad {\alpha> 0}$ 	& \checkmark\\ \hline
$A_{5,17}^{0,0,\gamma} \oplus \mathbb{R}$  &$(e^{25},-e^{15},\gamma e^{45},-\gamma e^{35},0,0),\quad 0<\gamma<1$ 										& \checkmark\\ \hline
${A_{5,17}^{0,0,1} \oplus \mathbb{R}}$  &${(e^{25},-e^{15}, e^{45},-e^{35},0,0)} $ 																& \checkmark\\ \hline
$A_{5,18}^{0} \oplus \mathbb{R}$  &$(e^{25}+e^{35},-e^{15}+e^{45},e^{45},-e^{35},0,0)$ 																	& \checkmark\\ \hline
$A_{5,19}^{-1,2} \oplus \mathbb{R}$  &$(-e^{15}+e^{23},e^{25},-2e^{35},2e^{45},0,0)$ 																	& \checkmark\\ \hline
\end{tabular} 
}
\end{center}
\end{table}
\renewcommand\arraystretch{1}

\begin{remark}
Notice that there are some misprints in \cite{Mac} that have been corrected in Table \ref{table1}, see also the appendix in \cite{FerMan}. 
\end{remark}

\begin{corollary}\label{TFClass}
A seven-dimensional Lie algebra with non-trivial center admitting torsion-free $\G_2$-structures is either abelian or isomorphic to $\frs_7$. 
\end{corollary}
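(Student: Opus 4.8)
The plan is to prove Corollary \ref{TFClass} as a refinement of the classification in Theorem \ref{classification}, using the fact that a torsion-free $\G_2$-structure is in particular a closed one. Since a $\G_2$-structure is torsion-free exactly when it is both closed and coclosed, the induced metric $g_\f$ is Ricci-flat (as recalled in the Introduction), and on a unimodular Lie algebra a Ricci-flat left-invariant metric must in fact be flat by the Alekseevskii--Kimelfeld theorem. Thus the first step is to observe that any Lie algebra carrying a torsion-free $\G_2$-structure appears in the list $\frn_1,\dots,\frn_{12},\frs_1,\dots,\frs_{11}$ produced by combining \cite{CoFe} with Theorem \ref{classification}, so it suffices to decide, case by case, which of these admit a \emph{torsion-free} (and not merely closed) structure.

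The key structural reduction is via the intrinsic torsion form. By Lemma \ref{tauLemma}, for a closed $\G_2$-structure written as $\f = u\,\omega\W\eta+\psip$ on the central extension $\frg=\frh\oplus\R z$, one has $|\tau|_\f^2 = |\Wm|_g^2 + 6|\Wu|_g^2$, so torsion-freeness is equivalent to $\Wm=0$ and $\Wu=0$. Second, I would use the flatness forced by Ricci-flatness to eliminate every nilpotent case except the abelian one: a nilpotent Lie group admits a flat left-invariant metric only if it is abelian (Milnor), so among $\frn_1,\dots,\frn_{12}$ only $\frn_1=\R^7$ survives. Third, for the solvable non-nilpotent candidates $\frs_1,\dots,\frs_{11}$, I would invoke the structure theory of flat solvable Lie groups: by Milnor's description, a Lie algebra admitting a flat left-invariant metric is a semidirect product of an abelian ideal with an abelian subalgebra acting by skew-symmetric derivations, and its commutator ideal must be abelian and contained in the nilradical with the adjoint representation of $\frg$ on it being skew-symmetric. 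This is a strong obstruction: one checks directly from the structure equations which of the $\frs_i$ can carry such a metric.

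I expect the main obstacle to be the case-by-case verification among $\frs_1,\dots,\frs_{11}$, where one must confirm that all but $\frs_7$ fail to admit a flat compatible metric, or equivalently fail the conditions $\Wm=\Wu=0$. The cleanest route is to exploit Milnor's flatness criterion: for each $\frs_i$ compute the commutator ideal $[\frg,\frg]$ and test whether $\ad_x$ is skew-symmetric (with respect to \emph{some} inner product making the structure torsion-free) for $x$ in a complementary abelian factor; the presence of a genuinely non-abelian nilradical or of $\ad$-maps with nonzero symmetric part will rule out all cases with nontrivial ``nilpotent shearing'' in their equations. For $\frs_7=(e^{25},-e^{15},e^{45},-e^{35},0,0,0)$, the nonzero brackets arise from the rotation generators $e_5$ acting by the skew-symmetric matrix $\begin{psmallmatrix}0&-1\\1&0\end{psmallmatrix}$ on two coordinate planes, so $\ad_{e_5}$ is skew-symmetric and the remaining generators are central; this exhibits $\frs_7$ as the Lie algebra of a flat solvable group, and a concrete torsion-free $\f$ can be written down explicitly and checked to satisfy $d\f=0=d*_\f\f$.

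Finally I would assemble these observations: the abelian algebra $\frs$-trivial case $\R^7$ and $\frs_7$ are the only survivors, so any seven-dimensional Lie algebra with non-trivial center admitting a torsion-free $\G_2$-structure is either abelian or isomorphic to $\frs_7$. The delicate point to handle carefully is making sure the flatness argument is applied correctly—namely that Ricci-flat plus homogeneous (hence unimodular here) genuinely forces flat, via Alekseevskii--Kimelfeld—since that is what collapses the Ricci-flat condition into Milnor's very restrictive flatness classification and thereby does almost all the work.
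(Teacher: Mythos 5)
Your overall strategy---torsion-free $\Rightarrow$ Ricci-flat $\Rightarrow$ flat by Alekseevskii--Kimelfeld, then Milnor's structure theory of flat Lie algebras, a case-by-case elimination, and an explicit example on $\frs_7$---is exactly the paper's route. But there is a genuine gap in how you enter the case analysis: unimodularity. The corollary does \emph{not} assume $\frg$ unimodular, whereas everything you use to build your list does: the reduction to solvability from \cite{FiRa1}, the nilpotent classification of \cite{CoFe}, and Theorem \ref{classification} are all statements about \emph{unimodular} Lie algebras. Your first step places $\frg$ among $\frn_1,\dots,\frn_{12},\frs_1,\dots,\frs_{11}$, but at that point nothing rules out a non-unimodular Lie algebra with non-trivial center carrying a torsion-free structure; such an algebra would simply be invisible to your case analysis. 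The slip is compounded by your statement of Alekseevskii--Kimelfeld: you attach a unimodularity hypothesis to it, and later write ``homogeneous (hence unimodular here)'', which is not a valid implication. In fact the theorem of \cite{AlekKim} needs no unimodularity: every Ricci-flat homogeneous Riemannian metric is flat.

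The repair---and the order the paper follows---is to get flatness \emph{first}, and extract unimodularity from it. By \cite{Milnor}, a flat Lie algebra is either abelian or splits $g_\f$-orthogonally as $\frg=\frb\oplus\fra$ with $\frb$ an abelian subalgebra, $\fra$ an abelian ideal, and $\ad_x$ skew-adjoint for all $x\in\frb$; skew-adjointness forces $\tr(\ad_x)=0$, so unimodularity (and 2-step solvability) is a \emph{conclusion} here, and only then may Theorem \ref{classification} be invoked. This also shortens the elimination: only the 2-step solvable entries $\frs_2,\dots,\frs_7$ need examining. Finally, your proposed test (``$\ad$-maps with nonzero symmetric part'') is not well posed as stated, since the symmetric part depends on the unknown compatible inner product; the invariant criterion, which the paper uses, is spectral: a skew-adjoint operator has purely imaginary eigenvalues, and one checks that $\ad_{e_6}$ on $\frs_2$ and $\frs_3$, $\ad_{e_1}$ on $\frs_4$, and $\ad_{e_5}$ on $\frs_5$ have non-zero real spectrum, while $\ad_{e_5}$ on $\frs_6$ has complex eigenvalues that are not purely imaginary. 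With those corrections your argument matches the paper's proof, including the final explicit torsion-free structure on $\frs_7$.
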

\begin{proof}
Let $\frg$ be a seven-dimensional Lie algebra with non-trivial center endowed with a torsion-free G$_2$-structure $\f$. 
Then, the metric $g_\f$ induced by $\f$ is Ricci-flat, and thus flat by \cite{AlekKim}. 
Consequently, the results of \cite{Milnor} imply that either $\frg$ is abelian, or $\frg$ splits as a $g_\f$-orthogonal direct sum $\frg = \frb \oplus \fra$, where $\frb$ is an abelian subalgebra, 
$\fra$ is an abelian ideal, and the endomorphism $\ad_x$ is skew-adjoint for every $x\in \frb$. 
In the latter case, $\frg$ is a unimodular 2-step solvable Lie algebra and the eigenvalues of $\ad_x$ are purely imaginary for every $x\in\frg$ (cf.~\cite[Sect.~2.8]{GoOn}). 
Among the Lie algebras obtained in Theorem \ref{classification}, the 2-step solvable ones are $\frs_2$, $\frs_3$, $\frs_4$, $\frs_5$, $\frs_6$, $\frs_7$. 
The first four Lie algebras in this list do not admit any flat metric, as  the following endomorphisms have real spectrum: $\ad_{e_6}\in\Der(\frs_2)$, $\ad_{e_6}\in\Der(\frs_3)$, 
$\ad_{e_1}\in\Der(\frs_4)$, $\ad_{e_5}\in\Der(\frs_5)$. Also the Lie algebra $\frs_6$ can be ruled out, since $\ad_{e_5}$ has complex eigenvalues that are not purely imaginary. 
Finally, the Lie algebra $\frs_7$ admits torsion-free G$_2$-structures. An example is given by the G$_2$-structure
\[
\f = e^{137} + e^{247} + e^{567} + e^{125} - e^{146} +e^{236} -e^{345},
\]
which induces the metric $g_\f = \sum_{i=1}^7 e^i \odot e^i$. 
\end{proof}

\begin{remark}\label{latticesconstr}
The simply connected solvable Lie groups whose Lie algebra is one of $\frs_1,\frs_2,\frs_3,\frs_4,\frs_5,\frs_7$ admit lattices, 
and this is the case also for the family of Lie algebras  $\frs_6\cong A^{\alpha,-\alpha,1}_{5,17}\oplus\R^2$, for certain values of the parameter $\alpha>0$ (see e.g.~\cite{FerMan} and the references therein). 
We now show that the simply connected Lie groups with Lie algebra $\frs_8$ or $\frs_9$ admit lattices, too. 
Indeed, since they are both almost nilpotent, it is possible to construct a lattice using the following criterion by \cite{Bock}.
Let $\G=\R \ltimes_{\mu} \mathrm{H}$ be an almost nilpotent Lie group with nilradical $\mathrm{H}$, and let $\frg = \R \ltimes_D \frh$ be its Lie algebra, where $\frh\coloneqq \mathrm{Lie}(\mathrm{H})$ and 
$D \in \Der(\frh)$ is such that $d\mu(t)|_{1_\G}=\exp(tD)$. If there exists $0\neq t_0\in \mathbb{R}$ and a rational
basis $(x_1, \ldots, x_n)$ of $\frh$ such that the coordinate matrix of $\exp(t_0 D)$ in such a basis is
integer, then $\Gamma\coloneqq t_0\mathbb{Z} \ltimes_{\mu} \text{exp} (\mathbb{Z}\langle x_1,\ldots,x_n \rangle)$ is a lattice in $\G$.

Let us consider the Lie algebra $\frs_9$ with the basis $(e_1,\ldots,e_7)$ as in Theorem \ref{classification}. 
We can write $\frs_9=\mathbb{R}\ltimes_D \frh$, where $\frh= \span_\R( e_1,\ldots, e_5, e_7)$ is a nilpotent Lie algebra with structure equations 
\begin{equation}\label{h}
(e^{35},e^{45},0,0,0,e^{34}),
\end{equation}
and 
\[
D=\mathrm{ad}(e_6)\rvert_{\frh}=
\begin{pmatrix}
0 & -1 & 0 & 0 & 0 & 0 \\ 
1 & 0 & 0 & 0 & 0 & 0 \\ 
0 & 0 & 0 & -1 & 0 & 0 \\ 
0 & 0 & 1 & 0 & 0 & 0 \\ 
0 & 0 & 0 & 0 & 0 & 0 \\ 
0 & 0 & 0 & 0 & 0 & 0
\end{pmatrix}.
\]
For $t_0=2\pi$, this basis satisfies the criterion of \cite{Bock} guaranteeing the existence of a lattice in the simply connected Lie group corresponding to $\frs_9$. 

Let us now focus on the Lie algebra $\frs_8$ with the basis $(e_1,\ldots,e_7)$ as in Theorem \ref{classification}. 
We note that $\frs_8={\R}\ltimes_D \frh$, where the structure equations of the nilpotent Lie algebra $\frh=\span_\R(e_1,\ldots,e_5,e_7)$ are those given in \eqref{h}, and
$D={\ad}(e_6)\rvert_{\frh}=\mathrm{diag}(1,-1,1,-1,0,0)$.
Let $t_0=\ln\left( \frac{3+\sqrt{5}}{2} \right)$. We note that $\exp(t_0 D)=E^{-1}B E$, where
\[
B=
\begin{pmatrix}
3 & 0 & -1 & 0 & 0 & 0 \\ 
0 & 3 & 0 & -1 & 0 & 0 \\ 
1 & 0 & 0 & 0 & 0 & 0 \\ 
0 & 1 & 0 & 0 & 0 & 0 \\ 
0 & 0 & 0 & 0 & 1 & 0 \\ 
0 & 0 & 0 & 0 & 0 & 1
\end{pmatrix}, \quad 
E=\begin{pmatrix}
\frac{2}{3-\sqrt{5}} & \frac{2}{3+\sqrt{5}} & 0 & 0 & 0 & 0 \\ 
0 & 0 & \frac{2}{3-\sqrt{5}} & \frac{2}{3+\sqrt{5}} & 0 & 0 \\ 
1 & 1 & 0 & 0 & 0 & 0 \\ 
0 & 0 & 1 & 1 & 0 & 0 \\ 
0 & 0 & 0 & 0 & 1 & 0 \\ 
0 & 0 & 0 & 0 & 0 & \sqrt{5}
\end{pmatrix}. 
\] 
Thus, the integer matrix $B$ is the matrix associated with $\exp(t_0D)$ with respect to a suitable basis $(\varepsilon_1,\ldots,\varepsilon_5,\varepsilon_7)$ of $\frh$.   
Moreover, $\frh$ has rational structure equations $(\varepsilon^{25},0,\varepsilon^{45},0,0,\varepsilon^{24})$ in such a basis. 
The existence of a lattice in the simply connected solvable Lie group with Lie algebra $\frs_8$ then follows. 
\end{remark}


\section{Semi-algebraic solitons on the central extension of a Lie algebra}\label{SolitonsSect}

Let $\G$ be a seven-dimensional simply connected Lie group with Lie algebra $\frg$. Consider a derivation $D$ of $\frg$, 
and denote by $X_D$ the vector field on $\G$ induced by the one-parameter group of automorphisms $F_t\in\mathrm{Aut}(\G)$ with derivative $dF_t|_{1_\G} = \exp(tD)\in \text{Aut}(\mathfrak{g})$. 
According to \cite{Lauret}, a left-invariant closed $\G_2$-structure $\f$ on $\G$ is said to be a \emph{semi-algebraic soliton} if it satisfies the Laplacian soliton equation \eqref{LapSolIntro} with respect to 
some vector field $X_D$ corresponding to a derivation $D\in\Der(\frg)$. In this case, $\mathcal{L}_{X_{D}}\f = D^*\f$, so that the equation \eqref{LapSolIntro} can be rewritten as follows  
\begin{equation}\label{semi-algebraic}
\Delta_{\varphi}\varphi=\lambda \varphi + D^*\varphi,
\end{equation}
where  
\[
A^*\beta(x_1,\ldots,x_k)=\beta(A x_1,\ldots,x_k)+\cdots+\beta(x_1,\ldots,A x_k),
\]  
for any $A\in\mathrm{End}(\frg)$, $x_1,\ldots,x_k\in \mathfrak{g}$, and $\beta\in \Lambda^k \mathfrak{g}^*$. Notice that $\Delta_\f\f = dd^*\f = d\tau,$ $\tau$ being the intrinsic torsion form of $\f$. 
When the $g_{\varphi}$-adjoint $D^t$ of $D$ is also a derivation of $\mathfrak{g}$, the $\text{G}_2$-structure $\varphi$ is called an \emph{algebraic soliton}.

\smallskip

We now focus on the case when $\mathfrak{g}$ is a unimodular Lie algebra with non-trivial center. 
By the results of Sect.~\ref{ClosedG2CExt}, we can assume that $\frg = \frh\oplus \R z$ is the central extension of a six-dimensional unimodular Lie algebra $(\mathfrak{h},\omegaz)$. 
Moreover, every closed $\G_2$-structure $\f$ on $\frg$ can be written both as $\f = \tomega\W\theta+\rho$, 
with $d\rho = -\tomega\W\omegaz$ and $d\tomega=0$, and as $\f = u\,\omega\W\eta+\psip,$ where  $(\omega,\psip)$  is an SU(3)-structure on $\frh$, 
$u\coloneqq |z|_\f$ and $\eta\coloneqq u^{-2} z^\flat = \eta_\frh+\theta$, for some $\eta_\frh\in\frh^*$. 

If $\f$ is a semi-algebraic soliton, the condition \eqref{semi-algebraic} is equivalent to a set of equations involving either the forms $(\tomega,\rho)$ or the 
SU(3)-structure $(\omega,\psip)$ on $\frh$. 
In the following, we shall see that it is possible to obtain information on the semi-algebraic soliton $\f$ under suitable assumptions. 
We are interested in the case when $z$ is an eigenvector of $D$, as this happens whenever $\frg$ is the contactization of a symplectic Lie algebra. 
Indeed, in that case the center of $\frg$ is $\frz(\frg)= \R z$ and it is preserved by any derivation of $\frg$. 

\smallskip

Henceforth, we assume that $\f$ is a semi-algebraic soliton on the unimodular Lie algebra $\frg=\frh\oplus\R z$, 
and that it satisfies the equation $\Delta_\f \f= \lambda\f+D^*\f$ with respect to a derivation $D\in\Der(\frg)$ such that 
$Dz = c z$, for some $c\in\R$. 
Then, we have $D^*\theta = \alpha + c\,\theta,$ with $\alpha\in\frh^*$. 
We let $\widetilde{D}\coloneqq \pi_\frh \circ D|_{\frh} \in\End(\frh)$, where $\pi_\frh:\frg\rightarrow \frh$ denotes the projection onto $\frh$. 

\smallskip

Using the expression of $\tau$ obtained in Lemma \ref{tauLemma}, we see that $\f = \tomega\W\theta+\rho= u\,\omega\W\eta+\psip$ solves the equation $d\tau = \Delta_\f\f=\lambda\f+D^*\f$ 
if and only if the following equations hold on $\frh$
\begin{equation}\label{solitonh}
\begin{cases}
2\,d(J\Wu) = - \widetilde{D}^*\omega - \left(c+\lambda\right)\omega,\\   
d\Wm -d*_g(J\Wu\W\psip) -2u \left(d(J\Wu)\W\eta_\frh - J\Wu\W d\eta\right) = u\,\omega\W\alpha + \widetilde{D}^*\rho + \lambda\rho, 
\end{cases}
\end{equation}
where $\Wu,\Wm$ are the intrinsic torsion forms of the SU(3)-structure $(\omega,\psip)$ and $\rho = u\,\omega\W\eta_\frh+\psip$. 
Recall that the 2-form $d\eta$ depends on the intrinsic torsion forms $\Wp$ and $\Wu$ of $(\omega,\psip)$ as shown in Lemma \ref{lemdeta}. 

\smallskip

The equations \eqref{solitonh} allow us to relate the constant $\lambda$ to the eigenvalue $c$ and the norm of the intrinsic torsion form of the semi-algebraic soliton. 
Before stating the result, we show a preliminary lemma. 

\begin{lemma}\label{EndPsi}
Let $(\omega,\psip)$ be an $\SU(3)$-structure on a six-dimensional vector space $V$ and let $A\in \End(V)$.
Then, 
\[
A^*\psip \wedge \psim = A^*\omega\wedge \omega^2 = \frac{1}{3}\, \tr(A)\,\omega^3.
\]
\end{lemma}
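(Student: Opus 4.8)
The plan is to exploit two elementary features of the operation $A\mapsto A^*$: it is a \emph{derivation} of the exterior algebra, and it acts on the one-dimensional top exterior power $\Lambda^6V^*$ as multiplication by the trace. The latter follows by noting that $\exp(tA)^*\mu=\det(\exp(tA))\,\mu=e^{t\,\tr(A)}\mu$ for every $\mu\in\Lambda^6V^*$, so differentiating at $t=0$ gives $A^*\mu=\tr(A)\,\mu$. The second identity is then immediate: since $A^*$ is a derivation and $2$-forms commute, $A^*(\omega^3)=3\,A^*\omega\wedge\omega^2$, while $A^*(\omega^3)=\tr(A)\,\omega^3$ because $\omega^3$ spans $\Lambda^6V^*$; comparing the two expressions yields $A^*\omega\wedge\omega^2=\tfrac13\tr(A)\,\omega^3$.

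For the first identity I would start from the $\SU(3)$ normalization $\psip\wedge\psim=\tfrac23\,\omega^3$ (that is, $3\,\rho\wedge J\rho=2\,\omega^3$), apply the derivation $A^*$, and use the top-form fact to obtain
\[
A^*\psip\wedge\psim+\psip\wedge A^*\psim=\tfrac{2}{3}\,\tr(A)\,\omega^3 .
\]
Thus the whole statement reduces to the \emph{symmetry} $A^*\psip\wedge\psim=\psip\wedge A^*\psim$, after which each summand must equal $\tfrac13\tr(A)\,\omega^3$. To prove this symmetry I would pass to the complexification and introduce the complex volume form $\Psi\coloneqq\psip+i\psim$, which is of type $(3,0)$ with respect to $J$ and satisfies $\Psi\wedge\bar\Psi=-2i\,\psip\wedge\psim$ (using that $\psip$ and $\psim$ are odd, hence square to zero under $\wedge$).

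The crux, and the step I expect to be the main obstacle, is establishing this symmetry by a type-decomposition argument. I would split $A=A_1+A_2$, where $A_1=\tfrac12(A-JAJ)$ commutes with $J$ and $A_2=\tfrac12(A+JAJ)$ anticommutes with $J$. For the complex-linear part $A_1$, the derivation $A_1^*$ preserves the bidegree, so it maps $(1,0)$-forms to $(1,0)$-forms and hence $A_1^*\Psi=\sigma\Psi$ for some $\sigma\in\C$, the space of $(3,0)$-forms being one-dimensional; expanding $A_1^*\psip\wedge\psim$ and $\psip\wedge A_1^*\psim$ in terms of $\Psi,\bar\Psi$ and using $\Psi\wedge\Psi=\bar\Psi\wedge\bar\Psi=0$ shows that both equal $-\tfrac{\Re\sigma}{2i}\,\Psi\wedge\bar\Psi$. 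For the complex-antilinear part $A_2$, the operator $A_2^*$ sends $(1,0)$-forms to $(0,1)$-forms, so $A_2^*\psip$ and $A_2^*\psim$ are of type $(2,1)+(1,2)$; wedging these with $\psip$ and $\psim$, which are of type $(3,0)+(0,3)$, always yields a form with more than three holomorphic or more than three antiholomorphic legs, hence $A_2^*\psip\wedge\psim=0=\psip\wedge A_2^*\psim$. Adding the two contributions gives the desired symmetry, and combining it with the displayed relation completes the proof.
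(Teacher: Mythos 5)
Your proposal is correct, but it follows a genuinely different route from the paper. The paper's proof is a direct computation: since $\SU(3)$ acts transitively on adapted frames, one may choose a basis of $V$ in which $\omega=e^{12}+e^{34}+e^{56}$, $\psip=e^{135}-e^{146}-e^{236}-e^{245}$, $\psim=e^{136}+e^{145}+e^{235}-e^{246}$, and then verify by expanding $A^*\psip\W\psim$ and $A^*\omega\W\omega^2$ for a generic $A\in\End(V)$ that both equal $2\,\tr(A)\,e^{123456}=\frac13\tr(A)\,\omega^3$. Your argument instead isolates the structural reasons behind the identity: the derivation property of $A\mapsto A^*$, the fact that $A^*$ acts on $\Lambda^6V^*$ as $\tr(A)$, and the reduction of the first identity to the symmetry $A^*\psip\W\psim=\psip\W A^*\psim$, which you establish by splitting $A$ into its $J$-commuting and $J$-anticommuting parts; the anticommuting part contributes nothing by a bidegree count, and the commuting part acts on the line of $(3,0)$-forms by a scalar $\sigma$, so both sides equal $-\frac{\Re\sigma}{2i}\,\Psi\W\bar\Psi$. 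All the steps check out (in particular the type computations and the identity $\Psi\W\bar\Psi=-2i\,\psip\W\psim$ are correct). The trade-off is clear: the paper's proof is shorter and entirely elementary, but it is an opaque verification that in practice requires mechanical (or computer-assisted) expansion; yours is coordinate-free, explains why only the trace of $A$ survives, and generalizes immediately to $\SU(n)$-structures in arbitrary even dimension, at the cost of invoking the complexified type decomposition.
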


\begin{proof}
We can always consider a basis $(e_1,\ldots,e_6)$ of $V$ which is adapted to the $\SU(3)$-structure $(\omega,\psip)$. 
Then, with respect to the dual basis $(e^1,\ldots,e^6)$ of $V^*$, we have
\[
\omega=e^{12}+e^{34}+e^{56}, \quad \psip=e^{135}-e^{146}-e^{236}-e^{245}, \quad \psim=e^{136}+e^{145}+e^{235}-e^{246}.
\]
Now, a direct computation shows that
\[
A^*\psip \wedge \psim = A^*\omega\W\omega^2 = 2\, \tr(A)\, e^{123456} = \frac13\, \tr(A)\,\omega^3. 
\]
\end{proof}

\begin{proposition}\label{signLambda}
The constant $\lambda$ is given by 
\begin{equation}\label{LambdaCentExt}
\lambda = -3\,c-\frac12\left(|\Wm|_g^2 +6\,|\Wu|_g^2\right) = -3\,c -\frac12\,|\tau|_\f^2.
\end{equation}
\end{proposition}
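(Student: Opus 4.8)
The second equality in \eqref{LambdaCentExt} is immediate from Lemma \ref{tauLemma}, which gives $|\tau|_\f^2 = |\Wm|_g^2 + 6\,|\Wu|_g^2$; so the real content is the first equality. My plan is to produce two independent scalar relations and eliminate the trace of $D$ between them. The first relation will come from the $\omega$-component of the soliton equation, namely the first line of \eqref{solitonh}, and the second from pairing the full soliton equation $\Delta_\f\f = \lambda\f + D^*\f$ with $\f$ itself. Both steps rest on unimodularity: of $\frh$ in the first, of $\frg$ in the second.

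First I would wedge the first equation of \eqref{solitonh} with $\omega^2$. Since $d\omega=0$ (recall $u\,\omega=\tomega$ is closed), I have $d(J\Wu)\W\omega^2 = d\!\left(J\Wu\W\omega^2\right)$; but $J\Wu\W\omega^2$ is a $5$-form on the six-dimensional unimodular Lie algebra $\frh$, and every exact top-degree form on a unimodular Lie algebra vanishes, so $d(J\Wu)\W\omega^2=0$. Using $\widetilde{D}^*\omega\W\omega^2 = \tfrac13\tr(\widetilde{D})\,\omega^3$ from Lemma \ref{EndPsi} together with $\omega\W\omega^2=\omega^3$, the wedged equation collapses to the scalar identity $\tr(\widetilde{D}) = -3\,(c+\lambda)$.

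Next I would pair the soliton equation with $\f$ using the inner product induced by $g_\f$, for which $\langle\f,\f\rangle = 7$. Since $\frg$ is unimodular, the Chevalley--Eilenberg differential $d$ and the codifferential $d^*$ of $g_\f$ are adjoint on $\Lambda^\bullet\frg^*$; hence, with $\Delta_\f\f = d\tau$ and $d^*\f=\tau$, I obtain $\langle\Delta_\f\f,\f\rangle = \langle d\tau,\f\rangle = \langle\tau,d^*\f\rangle = |\tau|_\f^2$. For the right-hand side I would invoke the identity $\langle A^*\f,\f\rangle = 3\,\tr(A)$, valid for every $A\in\End(\frg)$: the skew part of $A$ is annihilated because $t\mapsto|\f|^2$ is constant along the $\SO(7)$-flow it generates, while on symmetric endomorphisms $A\mapsto\langle A^*\f,\f\rangle$ is a $\Gd$-invariant linear functional, hence a multiple of $\tr$, normalised by the case $A=\mathrm{Id}$ (where $\mathrm{Id}^*\f=3\f$ gives $3\cdot 7 = 21$). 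Thus $\langle\lambda\f + D^*\f,\f\rangle = 7\lambda + 3\,\tr(D)$, and I conclude $|\tau|_\f^2 = 7\lambda + 3\,\tr(D)$.

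Finally, writing $D$ in block form for the splitting $\frg = \frh\oplus\R z$ and using $Dz=cz$ gives $\tr(D) = \tr(\widetilde{D}) + c = -2c - 3\lambda$ after substituting the first relation. Feeding this into $|\tau|_\f^2 = 7\lambda + 3\,\tr(D)$ yields $|\tau|_\f^2 = -2\lambda - 6c$, that is $\lambda = -3c - \tfrac12|\tau|_\f^2$, which combined with Lemma \ref{tauLemma} is exactly \eqref{LambdaCentExt}. I expect the main obstacle to be justifying the unimodularity-dependent identities cleanly --- the vanishing of exact top-degree forms on $\frh$ and the algebraic adjointness of $d$ and $d^*$ on $\frg$ --- together with the identity $\langle D^*\f,\f\rangle = 3\tr(D)$; once these are in place the remainder is bookkeeping.
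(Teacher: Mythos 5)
Your proof is correct, but the second half follows a genuinely different route from the paper's. Both arguments obtain the first scalar relation identically: wedging the first equation of \eqref{solitonh} with $\omega^2$ and using that every $5$-form on the unimodular algebra $\frh$ is closed, together with Lemma \ref{EndPsi}, gives $\tr(\widetilde{D})=-3(c+\lambda)$. For the second relation, however, the paper stays entirely in six dimensions: it wedges the second ($\rho$-component) equation of \eqref{solitonh} with $\psim$ and evaluates every term via the SU(3) identities of Lemmas \ref{su3identities}, \ref{lemdeta} and \ref{EndPsi}, arriving at $\tfrac12\left(|\Wm|_g^2+6|\Wu|_g^2\right)=\tr(\widetilde{D})+2\lambda$. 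You instead work in seven dimensions, pairing $\Delta_\f\f=\lambda\f+D^*\f$ with $\f$: unimodularity of $\frg$ (exact $7$-forms vanish, so $d$ and $d^*$ are algebraically adjoint) gives $\langle\Delta_\f\f,\f\rangle=|\tau|_\f^2$, and the representation-theoretic identity $\langle A^*\f,\f\rangle=3\tr(A)$ gives $7\lambda+3\tr(D)$ on the right, whence $|\tau|_\f^2=7\lambda+3\tr(D)$; combined with $\tr(D)=\tr(\widetilde D)+c$ this is equivalent to the paper's second relation (substituting the first relation turns your identity into exactly $\tfrac12|\tau|_\f^2=\tr(\widetilde D)+2\lambda$), so the elimination yields \eqref{LambdaCentExt} either way. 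What your route buys is brevity and generality: the identity $|\tau|_\f^2=7\lambda+3\tr(D)$ holds for any semi-algebraic soliton on a unimodular Lie algebra, with no reference to the center or the splitting $\frg=\frh\oplus\R z$, and it bypasses all term-by-term torsion-form computations; the ingredients you flag (vanishing of exact top-degree forms, adjointness of $d$ and $d^*$, invariant-theory for $\langle A^*\f,\f\rangle$ using that $S^2_0(\R^7)$ is $\Gd$-irreducible) are all standard and correctly deployed. What the paper's computation buys is that it remains within the six-dimensional framework of \eqref{solitonh}, which is the same machinery reused later in the section (e.g., in Proposition \ref{n9} and Theorem \ref{SASClassZ2}), so the wedge-with-$\psim$ calculation is not wasted effort there.
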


\begin{proof}
Wedging the first equation of \eqref{solitonh} by the closed 4-form $\omega^2$, we obtain 
\[
\widetilde{D}^*\omega\wedge \omega^2+(c+\lambda)\omega^3  = -2\, d(Jw_1)\wedge \omega^2 =- 2\,d(Jw_1\wedge \omega^2) = 0,
\]
as every 5-form on the unimodular Lie algebra $\frh$ is closed. 
Then, by Lemma  \ref{EndPsi}, we get
\[
\tr(\widetilde{D}) = -3(c+\lambda).
\]

Let us now consider the second equation in \eqref{solitonh}. Wedging both sides by $\psim$ and using the compatibility condition $\omega\W\psim=0$, we obtain
\begin{equation}\label{auxcomp}
\left(d\Wm -d*_g(J\Wu\W\psip) -2u \left(d(J\Wu)\W\eta_\frh - J\Wu\W d\eta\right)\right)\W\psim = \left( \widetilde{D}^*\rho + \lambda\rho\right)\W\psim.
\end{equation}
Since $\rho = u\,\omega\W\eta_\frh+\psip$ and $\omega\W\psim=0$, the RHS of \eqref{auxcomp} can be rewritten as follows
\[
\begin{split}
\left( \widetilde{D}^*\rho + \lambda\rho\right)\W\psim 	&= u\,\widetilde{D}^*\omega\W\eta_\frh \W\psim +\widetilde{D}^*\psip \W\psim + \lambda\psip\W\psim\\
										&= u\left(-2\,d(J\Wu)- \left(c+\lambda\right)\omega \right)\W\eta_\frh \W\psim +\frac13\,\tr(\widetilde{D})\, \omega^3 + \frac23\,\lambda\,\omega^3\\
										&= -2u\,d(J\Wu)\W\eta_\frh \W\psim +\frac13\,\tr(\widetilde{D})\, \omega^3 + \frac23\,\lambda\,\omega^3,
\end{split}
\]
where the second equality follows from the first equation of \eqref{solitonh}, Lemma \ref{EndPsi} and the normalization condition $\psip\W\psim=\frac23\omega^3$. 
The summands appearing in the LHS of \eqref{auxcomp} can be rewritten as follows. 
Since $\Wm\in\Lambda^2_8\frh^*$, we have
\[
d\Wm\W\psim = d(\Wm\W\psim) - \Wm\W d\psim = -\Wm\W\left(-*_g\Wm +J\Wu\W\psip \right) = |\Wm|_g^2\,\vol_g.
\]
Since every 5-form on $\frh$ is closed, we get
\[
\begin{split}
-d*_g(J\Wu\W\psip)\W\psim 	&= *_g(J\Wu\W\psip)\W d\psim = *_g\left(J\Wu\W\psip\right) \W  \left(-*_g\Wm+J\Wu\W\psip\right) \\    
						&=   J\Wu\W 2*_g(J\Wu) = 2\,|J\Wu|_g^2\, \vol_g = 2\,|\Wu|_g^2\, \vol_g,
\end{split}
\]
where we used the identity \ref{idiv}) of Lemma \ref{su3identities}. Finally, by Lemma \ref{lemdeta} and the identity \ref{idiii}) of Lemma \ref{su3identities}, we have
\[
\begin{split}
2u\,J\Wu\W d\eta \W\psim	&= -2\,J\Wu \W *_g(\Wu\W\psip)\W\psim  = 4\,J\Wu\W*_g(J\Wu) = 4\,|\Wu|_g^2\,\vol_g.  
\end{split}
\]
Hence, the equation \eqref{auxcomp} becomes
\[
\left(|\Wm|_g^2 +6\,|\Wu|_g^2\right) \vol_g = \frac13\,\tr(\widetilde{D})\, \omega^3 + \frac23\,\lambda\,\omega^3.
\]
Recalling that $\vol_g =\frac16\,\omega^3$, we have
\[
\frac12\,\left(|\Wm|_g^2 +6\,|\Wu|_g^2\right) = \tr(\widetilde{D}) + 2\,\lambda.
\]
Now, the thesis follows combining this identity with $\tr(\widetilde{D}) = -3(c+\lambda)$ and recalling that  $|\Wm|_g^2 + 6\,|\Wu|_g^2 = |\tau|_\f^2$  (cf.~Lemma \ref{tauLemma}). 
\end{proof}

As a consequence of Proposition \ref{signLambda}, we have the following. 

\begin{corollary}\label{lambdacont}
Let $(\frg,\theta)$ be the contactization of a symplectic unimodular Lie algebra $(\frh,\omegaz)$, and let $\f$ be a semi-algebraic soliton on $\frg$ such that $\Delta_\f\f=\lambda\f+ D^*\f$, for some $D\in\Der(\frg)$. 
Then, 
\[
\lambda = |\Wm|_g^2 + 6\,|\Wu|_g^2 = |\tau|_\f^2, 
\]
and $\f$ is expanding.  
\end{corollary}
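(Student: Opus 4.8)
The plan is to deduce the statement directly from Proposition \ref{signLambda}, which already expresses $\lambda = -3\,c-\frac12\,|\tau|_\f^2$ in terms of the eigenvalue $c$ of $D$ on the central vector $z$. Since $|\tau|_\f^2 = |\Wm|_g^2+6\,|\Wu|_g^2$ by Lemma \ref{tauLemma}, it is enough to show that the contactization hypothesis forces $c=-\lambda/2$: substituting this into $\lambda=-3\,c-\frac12\,|\tau|_\f^2$ gives $\lambda=|\tau|_\f^2$ immediately. Recalling from the proof of Proposition \ref{signLambda} that $\tr(\widetilde{D})=-3(c+\lambda)$, the relation $c=-\lambda/2$ is equivalent to the single identity $\tr(\widetilde{D})=3\,c$. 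Thus the whole problem reduces to computing $\tr(\widetilde{D})$ under the assumption that $\omegaz$ is non-degenerate.

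The key new input is obtained by exploiting the defining feature of a contactization, namely that $\omegaz=d\theta|_{\frh\times\frh}$ is symplectic. I would use that a derivation commutes with the Chevalley--Eilenberg differential, i.e.\ $D^*d=d\,D^*$ on $\Lambda^\bullet\frg^*$ (this holds on $\frg^*$ by a one-line check on $d\xi(x,y)=-\xi([x,y])$, and extends to all degrees since both sides are derivations). Applying it to $\theta$, writing $D^*\theta=\alpha+c\,\theta$ with $\alpha\in\frh^*$, and using $d\theta=\omegaz$ yields on $\frg$ the identity
\[
d\alpha + c\,\omegaz = D^*\omegaz.
\]
Restricting both sides to $\frh$ — where $d\alpha$ restricts to $d_\frh\alpha$, and $D^*\omegaz|_\frh=\widetilde{D}^*\omegaz$ because $\iota_z\omegaz=0$ — gives $d_\frh\alpha+c\,\omegaz=\widetilde{D}^*\omegaz$ as $2$-forms on $\frh$.

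Next I would wedge this identity with the closed $4$-form $\omegaz^2$. The term $d_\frh\alpha\W\omegaz^2=d_\frh(\alpha\W\omegaz^2)$ vanishes, because $\alpha\W\omegaz^2$ is a $5$-form and every $5$-form on the unimodular Lie algebra $\frh$ is closed. For the right-hand side, the computation in the proof of Lemma \ref{EndPsi}, carried out now in a Darboux basis for the symplectic form $\omegaz$, gives $\widetilde{D}^*\omegaz\W\omegaz^2=\tfrac13\,\tr(\widetilde{D})\,\omegaz^3$. Since $\omegaz^3\neq0$, I conclude $c=\tfrac13\,\tr(\widetilde{D})$, i.e.\ $\tr(\widetilde{D})=3\,c$, which is exactly the missing relation. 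Combining with $\tr(\widetilde{D})=-3(c+\lambda)$ gives $c=-\lambda/2$, and then Proposition \ref{signLambda} yields $\lambda=|\tau|_\f^2=|\Wm|_g^2+6\,|\Wu|_g^2$.

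Finally, to upgrade $\lambda\geq0$ to $\lambda>0$ (expanding), I would rule out $\tau=0$. If $\f$ were torsion-free, then by Corollary \ref{TFClass} the Lie algebra $\frg$ would be either abelian or isomorphic to $\frs_7$; but a contactization of a symplectic Lie algebra has one-dimensional center $\frz(\frg)=\R z$, so it is not abelian, and it is not isomorphic to $\frs_7$ since the only contactizations produced in Theorem \ref{classification} are $\frs_{10}$ and $\frs_{11}$. Hence $\tau\neq0$ and $\lambda=|\tau|_\f^2>0$. The only genuinely new step beyond Proposition \ref{signLambda} is the identity $c=\tfrac13\,\tr(\widetilde{D})$, and this is precisely where non-degeneracy of $\omegaz$ is indispensable: wedging with $\omegaz^2$ detects the full trace only because $\omegaz^3\neq0$, which fails for a degenerate $\omegaz$ — consistent with the fact that the non-contactizations among the classified Lie algebras need not be expanding.
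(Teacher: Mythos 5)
Your proposal is correct and follows essentially the same route as the paper: both reduce the claim to the identity $\tr(\widetilde{D})=3c$, obtained by applying $D^*$ to $\omegaz=d\theta$ to get $D^*\omegaz=d\alpha+c\,\omegaz$, wedging with $\omegaz^2$, and using non-degeneracy of $\omegaz$ together with unimodularity of $\frh$ (closedness of $5$-forms), then combining with $\tr(\widetilde{D})=-3(c+\lambda)$ from Proposition \ref{signLambda} and ruling out the torsion-free case via Corollary \ref{TFClass}. Your extra care in justifying $D^*d=dD^*$, the restriction identities on $\frh$, and the Darboux-basis version of Lemma \ref{EndPsi} only makes explicit steps the paper leaves implicit.
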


\begin{proof}
Since $\frg$ is the contactization of a symplectic Lie algebra $(\frh,\omegaz)$, we have $\frg = \frh \oplus\R z$ and $\frz(\frg)=\R z$. 
In particular, $Dz = cz,$ for some $c\in\R$. Therefore, by Proposition \ref{signLambda}, the constant $\lambda$ is given by
\[
\lambda = -3\,c-\frac12\left(|\Wm|_g^2 +6\,|\Wu|_g^2\right) = -3\,c -\frac12\,|\tau|_\f^2.
\]
Recall that $\omegaz=d\theta$ on $\frg$. Since $D\in\Der(\frg)$, we see that
\[
D^*\omegaz = D^*(d\theta) = d(D^*\theta) = d(\alpha + c\,\theta) = d\alpha + c\,\omegaz. 
\]
On the other hand, $\omegaz$ is a non-degenerate 2-form on the unimodular Lie algebra $\frh$. Consequently,
\[
 \frac13\,\tr(\widetilde{D})\,\omegaz^3 = \widetilde{D}^*\omegaz\W\omegaz^2 =  D^*\omegaz\W\omegaz^2 = (d\alpha + c\,\omegaz)\W\omegaz^2 = c\,\omegaz^3, 
\]
as every 5-form on $\frh$ is closed. 
Now, from the proof of Proposition \ref{signLambda}, we know that $3c = \tr(\widetilde{D}) = -3c-3\lambda$, whence $-2c = \lambda$. Therefore, we have 
$\lambda = |\Wm|_g^2 + 6\,|\Wu|_g^2 = |\tau|_\f^2$.  

To conclude the proof, we observe that $\lambda = 0$ if and only if $\f$ is torsion-free. 
By Corollary \ref{TFClass}, torsion-free $\G_2$-structures do not occur on the contactization of any symplectic unimodular Lie algebra. Thus, $\lambda>0$ and $\f$ is expanding. 
\end{proof}

The previous result applies, for instance, to the nilpotent Lie algebra $\frn_{12}$ endowed with the closed G$_2$-structure considered in \cite[Thm,~3.6]{FFM}.
\begin{example}\label{n12}
Consider the nilpotent Lie algebra $\frn_{12}$, and let $(e^1,\ldots,e^7)$ be the basis of $\frn_{12}^*$ for which the structure equations are the following
\[
\left(0,0,0,\frac{\sqrt{3}}{6}e^{12}, \frac{\sqrt{3}}{12}e^{13}-\frac{1}{4}e^{23}, -\frac{\sqrt{3}}{12}e^{23}-\frac{1}{4}e^{13},
\frac{\sqrt{3}}{12}e^{16}-\frac{1}{4}e^{15}+\frac{\sqrt{3}}{12}e^{25}+\frac{1}{4}e^{26} -\frac{\sqrt{3}}{6}e^{34}\right). 
\]
Recall that $\frn_{12}$ is the contactization of a six-dimensional symplectic nilpotent Lie algebra (cf.~Corollary \ref{CorNil}). 
The 3-form 
\[
\f = e^{167} +e^{257}+e^{347}+e^{135} -e^{124}-e^{236}-e^{456}  
\]
defines a closed G$_2$-structure on $\frn_{12}$ inducing the metric $g_\f = \sum_{i=1}^7 e^i \odot e^i$. The corresponding intrinsic torsion form is $\tau = \frac12 \left( e^{56} - e^{37} \right)$. 
A computation shows that $\f$ is an expanding algebraic soliton solving the equation $\Delta_\f\f = \lambda \f + D^*\f$ with $\lambda=\frac12 = |\tau|_\f^2$ and 
\[
D = -\frac18\,\diag\left(1,1,0,2,1,1,2\right) \in \Der(\frn_{12}). 
\]
\end{example}    

In addition to $\frn_{12}$, also the non-abelian nilpotent Lie algebras $\frn_2,\ldots,\frn_7$ admit (semi)-algebraic solitons (see \cite{Nicolini}). 
However, it is currently not known whether semi-algebraic solitons occur on the nilpotent Lie algebras $\frn_8$, $\frn_9$, $\frn_{10}$ and $\frn_{11}$. 

Using Corollary \ref{lambdacont} and Proposition \ref{invClosedContactization}, we can show that semi-algebraic solitons do not exist on $\frn_9$.

\begin{proposition}\label{n9} 
The nilpotent Lie algebra $\frn_9$ does not admit any semi-algebraic soliton.
\end{proposition}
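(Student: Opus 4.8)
The plan is to argue by contradiction, combining the rigidity of $\Der(\frn_9)$ with the constraints that Corollary \ref{lambdacont} imposes on a putative soliton. Suppose $\f$ is a semi-algebraic soliton on $\frn_9$, so that $\Delta_\f\f = \lambda\f + D^*\f$ for some $D \in \Der(\frn_9)$. Since $\frn_9$ is the contactization of a symplectic nilpotent Lie algebra (Corollary \ref{CorNil}), its center is $\R e_7$ and is preserved by $D$, so $De_7 = c\,e_7$ for some $c \in \R$. By Proposition \ref{invClosedContactization} I would realize $\frn_9 = \frh \oplus \R e_7$ as the contactization of $(\frh, \omegaz)$, with $\frh = (0,0,e^{12},e^{13},e^{23},e^{15}+e^{24})$ and $\omegaz = e^{16}+e^{25}+e^{34}$, and write $\f = \tomega \wedge e^7 + \rho$ together with the induced $\SU(3)$-structure $(\omega,\psip)$ on $\frh$, where $\omega = u^{-1}\tomega$.

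Next I would extract the numerical constraints. Because $\frn_9$ is neither abelian nor isomorphic to $\frs_7$, Corollary \ref{TFClass} rules out torsion-free structures, so $\tau \neq 0$; Corollary \ref{lambdacont} then gives $\lambda = |\tau|_\f^2 > 0$ and $\lambda = -2c$, whence $c < 0$. The key rigidity is the following: the lower central series of $\frn_9$ together with the relations $[e_1,e_6]=[e_2,e_5]=[e_3,e_4]=\pm e_7$ forces the semisimple part of any derivation to be a multiple of $\diag(1,2,3,4,5,6,7)$. Hence $c = 7a$ for some $a<0$, and $\widetilde D = \pi_\frh \circ D|_\frh \in \Der(\frh)$ has semisimple part $a\,\diag(1,2,3,4,5,6)$; in particular $\tr(\widetilde D) = 21a = 3c$ and $\tr(D) = -2\lambda$, which is consistent with Proposition \ref{signLambda} but not yet contradictory.

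The contradiction I would then seek inside the soliton equations \eqref{solitonh}. Wedging the first equation with $\omega^2$ only recovers the trace identity, so the real information sits in its cohomological content: $-\widetilde D^*\omega - (c+\lambda)\omega$ must be exact, i.e.\ $[\omega]$ is an eigenclass of $\widetilde D^*$ on $H^2(\frh)$ with eigenvalue $-(c+\lambda) = 7a$. Since the nilpotent part of $\widetilde D$ acts nilpotently and commutes with the semisimple part, this forces $[\omega]$ into the two-dimensional weight-$7$ subspace $\span([e^{16}+e^{34}],[e^{25}]) \subset H^2(\frh)$ -- the same subspace containing $[\omegaz]$. I would then impose the remaining conditions -- that $\tomega = u\,\omega$ be symplectic and tame $J_\rho$, that $d\rho = -\tomega\wedge\omegaz$ admit a definite solution $\rho$, and that the second equation of \eqref{solitonh} (which couples the torsion forms $\Wu,\Wm,\Wp$ to the off-diagonal part of $D$) hold -- and show that this over-determined system admits no admissible solution.

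I expect the main obstacle to be precisely this final compatibility check. The cohomological reduction cuts the freedom in $\tomega$ down to a handful of parameters, but one must still propagate them through the definiteness and taming inequalities and the full second soliton equation; establishing that the system is genuinely inconsistent (and not merely over-determined) is the crux, and I anticipate it requiring explicit, computer-assisted manipulation after reducing by $\mathrm{Aut}(\frn_9)$. A useful sanity check along the way is the closed $\G_2$-structure of Example \ref{exCT}, whose $\tomega$ has cohomology components of weight $8$ and therefore visibly violates the weight-$7$ requirement, confirming a posteriori that it is not a soliton.
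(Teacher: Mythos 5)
Your first half does track the paper's own proof: realize $\frn_9$ as the contactization of $(\frh,\omegaz)$, use preservation of the center to get $De_7=c\,e_7$, invoke Corollary \ref{lambdacont} to get $\lambda=-2c=|\tau|_\f^2>0$, and read off from the first equation of \eqref{solitonh} that $\widetilde{D}^*\tomega-c\,\tomega$ must be exact. The genuine gap is that you never derive a contradiction from this: you end by promising to ``show that this over-determined system admits no admissible solution,'' deferring the crux to an anticipated computer-assisted elimination involving the taming and definiteness inequalities and the second equation of \eqref{solitonh}. That unperformed check \emph{is} the content of the proposition, so what you have is a strategy, not a proof --- and the strategy overshoots what is needed. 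The paper closes the argument in two lines from ingredients you already have in hand. Writing the generic closed $2$-form as $\tomega = f_3\,e^{14}+f_5\left(e^{16}+e^{34}\right)+f_7\,e^{25}+f_8\left(e^{26}-e^{35}\right)+(\text{exact})$, the exactness constraint gives $f_3=f_8=0$, non-degeneracy of $\tomega$ then reads $f_5^2f_7\neq0$, and modulo the space of exact $4$-forms on $\frh$ (spanned by $e^{1234}$, $e^{1235}$, $e^{1245}$, $e^{1236}$, $e^{1246}+e^{1345}$, $e^{1256}-e^{2345}$, $e^{1356}-e^{2346}$) one computes
\[
\tomega\W\omegaz \,\equiv\, 2f_5\,e^{1346}+2\left(f_5+f_7\right)e^{2345},
\]
which is exact only if $f_5=f_7=0$. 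This contradicts $d\rho=-\tomega\W\omegaz$, i.e.\ mere closedness of $\f$: no taming condition, no definiteness of $\rho$, and no use of the second soliton equation is required. Your own ``sanity check'' observation that $[\omegaz]$ lies in the same weight-$7$ subspace actually points at this: wedging two forms whose classes sit in that subspace, one of them non-degenerate, can never be exact.

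There is also a flaw in the reduction step itself. You justify $[\tomega]\in\span\left([e^{16}+e^{34}],[e^{25}]\right)$ by saying the nilpotent part of $\widetilde{D}$ ``acts nilpotently and commutes with the semisimple part.'' But the strictly lower-triangular part of a derivation of $\frn_9$ does \emph{not} commute with the grading derivation $\diag(1,\ldots,7)$, so this is not a Jordan decomposition; in fact, for $c\neq0$ the generic derivation has seven distinct eigenvalues and is itself semisimple, conjugate to a multiple of the grading derivation only by a unipotent matrix. Consequently an eigenclass of $\widetilde{D}^*$ on $H^2(\frh)$ with eigenvalue $c$ is a priori allowed a component along the weight-$5$ class $[e^{14}]$; ruling that out (equivalently, checking that the off-diagonal parameters of the derivation act trivially on $H^2(\frh)$, so that the coefficient of $e^{14}$ in $\widetilde{D}^*\tomega-c\,\tomega$ is exactly $-2h_1f_3$) is precisely the explicit matrix computation the paper performs. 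As it happens this gap is harmless for the endgame --- the contradiction above only needs $f_8=0$, since a nonzero $f_3$ contributes only the exact term $-f_3\,e^{1245}$ to $\tomega\W\omegaz$ --- but you cannot see that without doing the final computation, which is again the step your proposal leaves out.
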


\begin{proof}
As we observed in Corollary \ref{CorNil}, the Lie algebra $\frn_9$ is the contactization of the nilpotent Lie algebra 
$\frh =(0,0,e^{12},e^{13},e^{23},e^{15} + e^{24})$ endowed with the symplectic form $\omegaz =e^{16} + e^{34} + e^{25}$. 
In particular, $\frz(\frn_9) = \R e_7$. 

By Proposition \ref{invClosedContactization}, every closed G$_2$-structure on $\frn_9$ can be written as $\f = \tomega\W e^7+\rho,$ 
where  $\rho$ is a definite 3-form on $\frh$,  $\tomega$ is a taming symplectic form for $J_\rho$ and $d  \rho = - \tomega \wedge \omegaz$ (see Example \ref{exCT} for an explicit case). 
This G$_2$-structure is a semi-algebraic soliton solving $\Delta_\f\f=\lambda\f+D^*\f$, for some $\lambda\in\R$ and some $D\in\Der(\frn_9)$, 
if and only if the equations \eqref{solitonh} are satisfied. By Corollary \ref{lambdacont}, we must have $De_7=c e_7$ and $\lambda = -2c = |\tau|_\f^2>0$, 
Moreover, the first equation in \eqref{solitonh} constrains the 2-form on $\frh$ 
\[
\beta\coloneqq \widetilde{D}^*\tomega+\left(c+\lambda\right)\tomega = \widetilde{D}^*\tomega -c\,\tomega 
\]
to be exact. 
We shall show that if this last condition holds for any derivation $D$ and any symplectic form $\tomega$ on $\frn_9$, then $\tomega\W\omegaz$ cannot be exact. 

The matrix associated with the generic derivation $D\in\Der(\frn_9)$ with respect to the basis $(e_1,\ldots, e_7)$ is given by
\[
D=\begin{pmatrix}
h_1 & 0 & 0 & 0 & 0 & 0 & 0 \\ 
0 & 2h_1 & 0 & 0 & 0 & 0 & 0 \\ 
h_2 & h_3 & 3h_1 & 0 & 0 & 0 & 0 \\ 
h_4 & h_5 & h_3 & 4h_1 & 0 & 0 & 0 \\ 
h_6 & h_7 & -h_2 & 0 & 5h_1 & 0 & 0 \\ 
h_8 & h_9 & h_7-h_4 & -h_2 & h_3 & 6h_1 & 0 \\ 
h_{10} & h_{11} & h_9-h_6 & h_7-2h_4 & -h_5-h_2 & h_3 & 7h_1
\end{pmatrix},
\]
where $h_j\in \mathbb{R}$.  In particular, $c=7h_1$ and we can assume that $h_1<0$.

The generic closed 2-form $\tomega$ on $\frh$ is 
\[
\tomega 	= f_1 e^{12} + f_2 e^{13} + f_3 e^{14} + f_4 \left(e^{15} + e^{24} \right) + f_5 \left( e^{16} + e^{34} \right) + f_6 e ^{23} + f_7 e^{25} + f_{8} \left(e^{26} -  e^{35}\right), 
\]
and it is non-degenerate if and only if $f_5^2f_7 - f_3f_8^2\neq 0$. 
Now, we have
\[
\begin{split}
\beta	&=\widetilde{D}^*{\tomega}-7h_1\,{\tomega}\\ 
	&=\left(h_3f_2-4\,h_1f_1+h_5f_3-h_4f_4+h_7f_4+h_9f_5-h_2f_6-h_6f_7-h_8f_8  \right) e^{12} \\
&\quad + \left(h_3f_3-3\,h_1f_2-h_2f_4-2\,h_4f_5+f_5h_7+h_6f_8 \right)e^{13} +\left(h_3f_5 -h_1f_4 - h_2f_8 \right) \left(e^{15}+e^{24} \right) \\
&\quad +\left(h_3f_4-h_5f_5-2\,h_1f_6-h_2f_7-f_8h_4+2\,h_7f_8 \right) e^{23}\\
&\quad -2h_1f_3\, e^{14} + h_{1} f_{8}\, e^{26} - h_1 f_{8}\,e^{35}. 
\end{split}
\]
Since the space of exact $2$-forms on $\mathfrak{h}$ is generated by $e^{12}, e^{13}, e^{15}+e^{24},  e^{23}$, and since $h_1<0$, we see that $\beta$ is exact if and only if $f_3=0=f_8$. 
Consequently, $\tomega$ is non-degenerate if and only if $f_5 f_7\neq0$. This last constraint implies that $\tomega\W\omegaz$ cannot be exact, 
since the space of exact 4-forms is spanned by  
\[
e^{1234}, e^{1235},  e^{1245}, e^{1236}, e^{1246}+e^{1345}, e^{1256}-e^{2345},  e^{1356}-e^{2346}. 
\]
\end{proof}

\begin{remark}\label{s8s9}
Using a similar argument involving equations \eqref{solitonh} as in the proof of the last proposition, 
one can show that semi-algebraic solitons do not occur on the solvable non-nilpotent Lie algebras $\frs_8$ and $\frs_9$. 
\end{remark}

By \cite{CoFe} and Theorem \ref{classification}, we know that a seven-dimensional unimodular Lie algebra with one-dimensional center  admitting closed G$_2$-structures is isomorphic to one of  
$\frn_8$, $\frn_9$, $\frn_{10}$, $\frn_{11}$, $\frn_{12}$, $\frs_2$, $\frs_3$, $\frs_4$, $\frs_8$, $\frs_9$, $\frs_{10}$, $\frs_{11}$. 
By Proposition \ref{n9} and Remark \ref{s8s9}, the only ones which may admit semi-algebraic solitons are
$\frn_8$, $\frn_{10}$, $\frn_{11}$, $\frn_{12}$, $\frs_2$, $\frs_3$, $\frs_4$, $\frs_{10}$, $\frs_{11}$. 

Examples of expanding semi-algebraic solitons are known on $\frn_{12}$ (see Example \ref{n12}), and on the Lie algebras $\frs_2$ and $\frs_4$ (see \cite[Prop.~6.5]{FiRa}). 
In the remaining cases, it is still not known whether semi-algebraic solitons exist. However, if there is any, it must be expanding. 
This follows from Proposition \ref{lambdacont} when the Lie algebra is one of $\frn_{10}$, $\frn_{11}$, $\frn_{12}$, $\frs_{10}$, $\frs_{11}$,  
while it follows from a direct computation involving equations \eqref{solitonh} when the Lie algebra is one of $\frn_8$, $\frs_2$, $\frs_3$ and $\frs_4$. 

\medskip

When the center of the Lie algebra is at least two-dimensional, we have the following classification result. 

\begin{theorem}\label{SASClassZ2}
Let $\frg$ be a seven-dimensional unimodular Lie algebra with $\dim\frz(\frg)\geq 2$ admitting closed $\G_2$-structures.
Then, $\frg$ admits semi-algebraic solitons if and only if it is isomorphic to one of $\frn_1$, $\frn_2$, $\frn_3$, $\frn_4$, $\frn_5$, $\frn_6$, $\frn_7$, $\frs_5$, $\frs_6$, $\frs_7$.
\end{theorem}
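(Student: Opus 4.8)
The plan is to combine the classification of Theorem \ref{classification} with the soliton equations \eqref{solitonh} and the existence results already available in the literature, reducing the statement to a finite check. By \cite{CoFe} and Theorem \ref{classification}, a seven-dimensional unimodular Lie algebra with non-trivial center admitting closed $\G_2$-structures is isomorphic to one of $\frn_1,\ldots,\frn_{12}$ or $\frs_1,\ldots,\frs_{11}$. Removing from this list the algebras with one-dimensional center recorded above (namely $\frn_8,\frn_9,\frn_{10},\frn_{11},\frn_{12},\frs_2,\frs_3,\frs_4,\frs_8,\frs_9,\frs_{10},\frs_{11}$), the candidates with $\dim\frz(\frg)\geq2$ are exactly $\frn_1,\ldots,\frn_7$ together with $\frs_1,\frs_5,\frs_6,\frs_7$. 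Thus the theorem is equivalent to the two assertions: that each of $\frn_1,\ldots,\frn_7,\frs_5,\frs_6,\frs_7$ admits a semi-algebraic soliton, and that $\frs_1$ does not.

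For the existence part most cases are already known or immediate. The abelian algebra $\frn_1$ carries a flat torsion-free $\G_2$-structure, which is a trivial semi-algebraic soliton with $\lambda=0$ and $D=0$; likewise $\frs_7$ admits a torsion-free $\G_2$-structure by Corollary \ref{TFClass}. The non-abelian nilpotent algebras $\frn_2,\ldots,\frn_7$ carry (semi-)algebraic solitons by \cite{Nicolini}. For the two remaining algebras $\frs_5$ and $\frs_6$ I would exhibit explicit expanding algebraic solitons, i.e.\ produce a closed $\G_2$-structure $\f$ and a $g_\f$-symmetric derivation $D\in\Der(\frg)$ solving $\Delta_\f\f=\lambda\f+D^*\f$ with $\lambda>0$, in the spirit of Example \ref{n12}; since these algebras have real (non-imaginary) $\ad$-eigenvalues, such solitons are genuinely non-flat.

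The substantive point is the non-existence on $\frs_1$, which I would prove following the strategy of Proposition \ref{n9} and Remark \ref{s8s9}. Here $\frz(\frs_1)=\span\{e_1,e_7\}$ is two-dimensional, so to apply the framework of Section \ref{SolitonsSect} I first need a central eigenvector of the candidate derivation. This is available: since $e_1=-[e_2,e_3]$ while $e_7\notin[\frg,\frg]$, the line $\R e_1=[\frg,\frg]\cap\frz(\frs_1)$ is characteristic, so every $D\in\Der(\frs_1)$ satisfies $De_1\in\R e_1$ and acts on $\frz(\frs_1)$ by an upper-triangular matrix; in particular $D$ has a real eigenvector $z\in\frz(\frs_1)$ with $Dz=cz$, and I may fix such a $z$. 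By Proposition \ref{invClosedContactization} any closed $\G_2$-structure then reads $\f=\tomega\W\theta+\rho$ with $\tomega$ a non-degenerate closed $2$-form on $\frh=\ker\theta$, $\omegaz=d\theta|_\frh$, and $\tomega\W\omegaz$ exact. Imposing the first equation of \eqref{solitonh} forces the $2$-form $\beta\coloneqq\widetilde{D}^*\tomega+(c+\lambda)\tomega$ on $\frh$ to be exact, and I would then compute $\beta$ for the generic $D\in\Der(\frs_1)$ and the generic closed $\tomega$, expecting exactness of $\beta$ to annihilate enough coefficients of $\tomega$ that it can no longer be non-degenerate while keeping $\tomega\W\omegaz$ exact, contradicting Proposition \ref{invClosedContactization}.

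The hard part is precisely this last computation, which is heavier than in Proposition \ref{n9} because the two-dimensional center makes the distinguished vector $z$ non-canonical: the induced form $\omegaz=d\theta|_\frh$ depends on the chosen eigendirection and may be degenerate (indeed $\frs_1\cong\frg_{6,38}^0\oplus\R$, for which $\omegaz$ can be taken to vanish), so one must verify that the incompatibility persists for every admissible $z$. I would organise the argument by the two eigendirections of $D|_{\frz(\frs_1)}$ and reduce each to a homogeneous linear-algebra obstruction on the coefficients of $\tomega$, exactly as in the system displayed in the proof of Proposition \ref{n9}, concluding that no choice of closed $\G_2$-structure and derivation can simultaneously satisfy the soliton equation and the closedness constraint.
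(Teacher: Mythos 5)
Your reduction to the candidate list $\frn_1,\ldots,\frn_7,\frs_1,\frs_5,\frs_6,\frs_7$ and your treatment of the existence half are essentially the paper's (the paper settles $\frs_5$, $\frs_6$ by citing \cite[Prop.~6.5]{FiRa} rather than constructing new examples, and handles $\frn_1$, $\frs_7$ via torsion-free structures and $\frn_2,\ldots,\frn_7$ via \cite{Nicolini}, exactly as you outline). The problem is the non-existence argument for $\frs_1$, which is the only substantive point, and there your plan has a genuine gap.

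You propose to rule out $\frs_1$ by imposing only the first equation of \eqref{solitonh}, i.e.\ exactness of $\beta=\widetilde{D}^*\tomega+(c+\lambda)\tomega$, and expecting this to be incompatible with non-degeneracy of $\tomega$ and exactness of $\tomega\W\omegaz$. No such incompatibility exists: take the eigendirection $z=e_7$, for which $\frh\cong\frg_{6,38}^0$ and $\omegaz=0$, and the candidate data $D=0$, $\lambda=0$. Then $\beta=0$ is exact, $\tomega\W\omegaz=0$ is exact, and $\frg_{6,38}^0$ carries symplectic forms, so every constraint you impose is satisfied and your computation terminates without a contradiction. This degenerate case is not vacuous: it is precisely the case that must be excluded, because the full soliton equation with $D=0$, $\lambda=0$ forces $\f$ to be torsion-free. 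In Proposition \ref{n9} the analogous case was killed \emph{before} the linear algebra started, since Corollary \ref{lambdacont} gives $\lambda=|\tau|_\f^2>0$ and $c=-\lambda/2<0$ on a contactization (this strict sign, $h_1<0$, is what drives the exactness computation there). But $\frs_1$ is \emph{not} a contactization --- its center is two-dimensional --- so Corollary \ref{lambdacont} is unavailable, and you supply no substitute for this sign information. Consequently the argument cannot close as described. The paper instead works with the fixed splitting $\frs_1\cong\frg_{6,38}^0\oplus\R$ (no eigenvector analysis needed), writes the generic closed pair $(\tomega,\rho)$ and the generic $D\in\Der(\frs_1)$, uses definiteness of $b_\f$ to get an open constraint on the coefficients, and exploits exactness of the full $3$-form $\lambda\f+D^*\f=\Delta_\f\f$ to force $\lambda=0$ and $\frz(\frs_1)\subset\ker D$; only then does Proposition \ref{signLambda} (with $c=0$, $\lambda=0$) give $|\tau|_\f=0$, and Corollary \ref{TFClass} --- which you never invoke for $\frs_1$ --- provides the contradiction. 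Any correct completion of your plan will need this torsion-free endgame (or, equivalently, the second equation of \eqref{solitonh}); the first equation alone is provably too weak.
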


\begin{proof}  
If $\frg$ is nilpotent, then it must be isomorphic to one of $\frn_1$, $\frn_2$, $\frn_3$, $\frn_4$, $\frn_5$, $\frn_6$, $\frn_7$, by the classification of \cite{CoFe}. 
Every $\G_2$-structure $\f$ on the abelian Lie algebra $\frn_1$ is torsion-free, and thus it solves the equation $\Delta_\f\f = \lambda \f + D^*\f$ with $\lambda=0$ and $D=0\in\Der(\frn_1)$. 
In the remaining cases, the existence of semi-algebraic solitons is known from \cite{Nicolini}. 

We can then focus on the case when $\frg$ is solvable non-nilpotent. By Theorem \ref{classification}, $\frg$ must be isomorphic to one of $\frs_1$, $\frs_5$, $\frs_6$, $\frs_7$.
Examples of semi-algebraic on $\frs_5$ and $\frs_6$ were given in \cite[Prop.~6.5]{FiRa}. 
By Corollary \ref{TFClass}, the Lie algebra $\frs_7$ admits torsion-free G$_2$-structures, which are semi-algebraic solitons with $\lambda=0$ and $D=0\in\Der(\frs_7)$.  

To conclude the proof, we must show that the Lie algebra $\frs_1$ does not admit any semi-algebraic soliton.     
Let us assume by contradiction that $\f$ is a semi-algebraic soliton on $\frs_1$. Then, as $\frs_1 \cong \frg_{6,38}^0\oplus \mathbb{R}$, 
we can write $\f={\tomega}\wedge e^7+\rho$, where $e^7$ spans $\R^*$, and ${\tomega}$ and $\rho$ are closed forms on $\frg_{6,38}^0$. In particular, we have 
\[
\begin{split}
\tomega &= f_1 \left (2\,e^{16}+e^{25}-e^{34} \right)  +f_2\,e^{23} + f_3 \left(e^{24} +e^{35}\right) +f_4\,e^{26} +f_5\,e^{36} + f_6\,e^{46} + f_7\,e^{56},\\ 
\rho 	&= p_1\,e^{123} + p_2\left(e^{124} +e^{135} \right) +p_3\,e^{126} + p_4\,e^{136} + p_5\left(e^{146}- e^{235} \right) + p_6\left( e^{156} + e^{234}\right) \\
	&\quad +p_7\,e^{236} + p_8\,e^{246} + p_9\,e^{256} +p_{10}\,e^{346} + p_{11}\,e^{356}+p_{12}\,e^{456},
\end{split}
\]
where the $f_i$'s and $p_j$'s are real parameters. 
The symmetric bilinear form $b_\f$ induced by $\f$ as in \eqref{bphi} satisfies $b_\f(e_1,e_1) = -2p_2^2f_1\,e^{1234567}$ and $b_\f(e_4,e_4) = -p_2 f_1 p_{12} e^{1234567}$. 
Since $b_\f$ is definite, we must have $p_2 p_{12}  f_1\neq0$. 

The generic derivation $D\in\Der(\frs_1)$ has the following expression with respect to the basis $(e_1,\ldots,e_7)$ of $\frs_1$ 
\[
D =  \left( 
\begin{array}{ccccccc} 
 h_{{1}}&h_{{2}}&h_{{3}}&0&0&h_{{4}}&h_{{5}}\\ \noalign{\medskip}
 0&\tfrac12\,h_{{1}}&h_{{6}}&0&0&h_{{2}}&0\\ \noalign{\medskip}
 0&-h_{{6}}&\tfrac12\,h_{{1}}&0&0&h_{{3}}&0\\ \noalign{\medskip}
 0&h_{{7}}&h_{{8}}&\tfrac12\,h_{{1}}&h_{{6}}&h_{{9}}&0\\ \noalign{\medskip}
 0&-h_{{8}}&h_{{7}}&-h_{{6}}&\tfrac12\,h_{{1}}&h_{{10}}&0\\ \noalign{\medskip}
 0&0&0&0&0&0&0\\ \noalign{\medskip}
 0&0&0&0&0&h_{{11}}&h_{{12}}
 \end {array} \right), 
\]
where $h_i\in\R$. 
Since $\f$ is a semi-algebraic soliton, there is some $\lambda\in\R$ such that the 3-form $D^*\varphi+\lambda \varphi$ on $\frs_1$ is exact. 
Under the constraint $p_2 p_{12}  f_1\neq0$, this implies that $\lambda=0$ and that $\frz(\frs_1) = \span_\R(e_1,e_7)\subset \ker(D)$. 
By Proposition \ref{signLambda}, we then have $|\tau|_\f=0$, i.e., the G$_2$-structure $\f$ is torsion-free. 
However, $\frs_1$ does not carry any torsion-free $\G_2$-structure by Corollary \ref{TFClass}. 
\end{proof}

\bigskip\noindent
{\bf Acknowledgements.}  
The authors were supported by GNSAGA of INdAM and by the project PRIN 2017  ``Real and Complex Manifolds: Topology, Geometry and Holomorphic Dynamics''. 
The authors would like to thank Giovanni Calvaruso for useful conversations and the anonymous referees for their valuable comments and suggestions.

\end{document}